\theoremstyle{plain}
\newtheorem{theorem}{Theorem}
\newtheorem{lemma}{Lemma}
\newtheorem{corollary}{Corollary}
\newtheorem{proposition}{Proposition}
\newtheorem{problem}{Problem}
\newtheorem{conjecture}{Conjecture}
\theoremstyle{definition}
\newtheorem{definition}{Definition}
\DeclareMathOperator{\dB}{dB}
\DeclareMathOperator{\GF}{GF}
\DeclareMathOperator{\lcm}{lcm}
\begin{document}
\begin{frontmatter}
	\title{Partitioning de~Bruijn Graphs into Fixed-Length Cycles\\ for Robot Identification and Tracking}
	\author[monash ecse,monash maths]{Tony~Grubman\corref{cor1}}
	\ead{tony.grubman@monash.edu}
	\author[monash ecse,utc heudiasyc]{Y.~Ahmet~\c{S}ekercio\u{g}lu}
	\ead{asekerci@utc.fr}
	\author[monash maths]{David~R.~Wood\fnref{fn1}}
	\ead{david.wood@monash.edu}
	
	\cortext[cor1]{Corresponding author.}
	\fntext[fn1]{Research supported by the Australian Research Council.}
	
	\address[monash ecse]{Department of Electrical and Computer Systems Engineering, Monash University, Melbourne, Australia}
	\address[monash maths]{School of Mathematical Sciences, Monash University, Melbourne, Australia}
	\address[utc heudiasyc]{Heudiasyc Laboratory, Compi{\`{e}}gne University of Technology, Compi{\`{e}}gne, France}
	\begin{abstract}
		We propose a new camera-based method of robot identification, tracking and orientation estimation. The system utilises coloured lights mounted in a circle around each robot to create unique colour sequences that are observed by a camera. The number of robots that can be uniquely identified is limited by the number of colours available, $q$, the number of lights on each robot, $k$, and the number of consecutive lights the camera can see, $\ell$. For a given set of parameters, we would like to maximise the number of robots that we can use. We model this as a combinatorial problem and show that it is equivalent to finding the maximum number of disjoint $k$-cycles in the de~Bruijn graph $\dB(q,\ell)$.
		
		We provide several existence results that give the maximum number of cycles in $\dB(q,\ell)$ in various cases. For example, we give an optimal solution when $k=q^{\ell-1}$. Another construction yields many cycles in larger de~Bruijn graphs using cycles from smaller de~Bruijn graphs: if $\dB(q,\ell)$ can be partitioned into $k$-cycles, then $\dB(q,t\ell)$ can be partitioned into $tk$-cycles for any divisor $t$ of $k$. The methods used are based on finite field algebra and the combinatorics of words.
	\end{abstract}
	\begin{keyword}
		graph theory \sep
		robot network \sep
		de~Bruijn graph \sep
		graph decomposition \sep
		pose estimation \sep
		linear feedback shift register
	\end{keyword}
\end{frontmatter}

\section{Introduction}
A \emph{robot network} is a collection of robots working together to achieve a common goal. In order for the robots in such a network to cooperate effectively, the ability to observe each other's movements is critical. In many applications, distinguishing between the robots is necessary, but is usually difficult because the robots are identical.

For example, in a \emph{formation control} system, robots collectively arrange themselves in some fixed geometric configuration \cite{anderson2008,das2002}. Each robot controls its position relative to its neighbours. To achieve this, the robot must continuously measure the position and determine the identity of each neighbour. Some formation control systems may also benefit from knowledge of the relative orientation of its neighbours, since this information can be used to coordinate views and improve the stability of the system.

We present a novel camera-based method for robot identification, orientation estimation, and approximate distance/angle measurements. The system uses a camera to observe sequences of coloured lights mounted on the robots. The lights are mounted in a circle around each robot (in a plane parallel to the ground), such that a camera may see only some of the lights. The sequences of colours are chosen so that any consecutive subsequence of sufficient length corresponds uniquely to a particular robot in a particular orientation.

This system was implemented in an existing network of \emph{eBugs}. The \emph{eBug} \cite{dademo2011} is a robotics platform designed at Monash University's Wireless Sensor and Robot Networks Laboratory \cite{wsrnlab}. It is equipped with sixteen RGB LEDs (red, green and blue light-emitting diodes) on its perimeter, which can be programmed to display a sequence of colours. A photo of an eBug may be seen in Figure~\ref{fig:ebug}.

\begin{figure}[htb]
\centering
\includegraphics[width=0.45\linewidth]{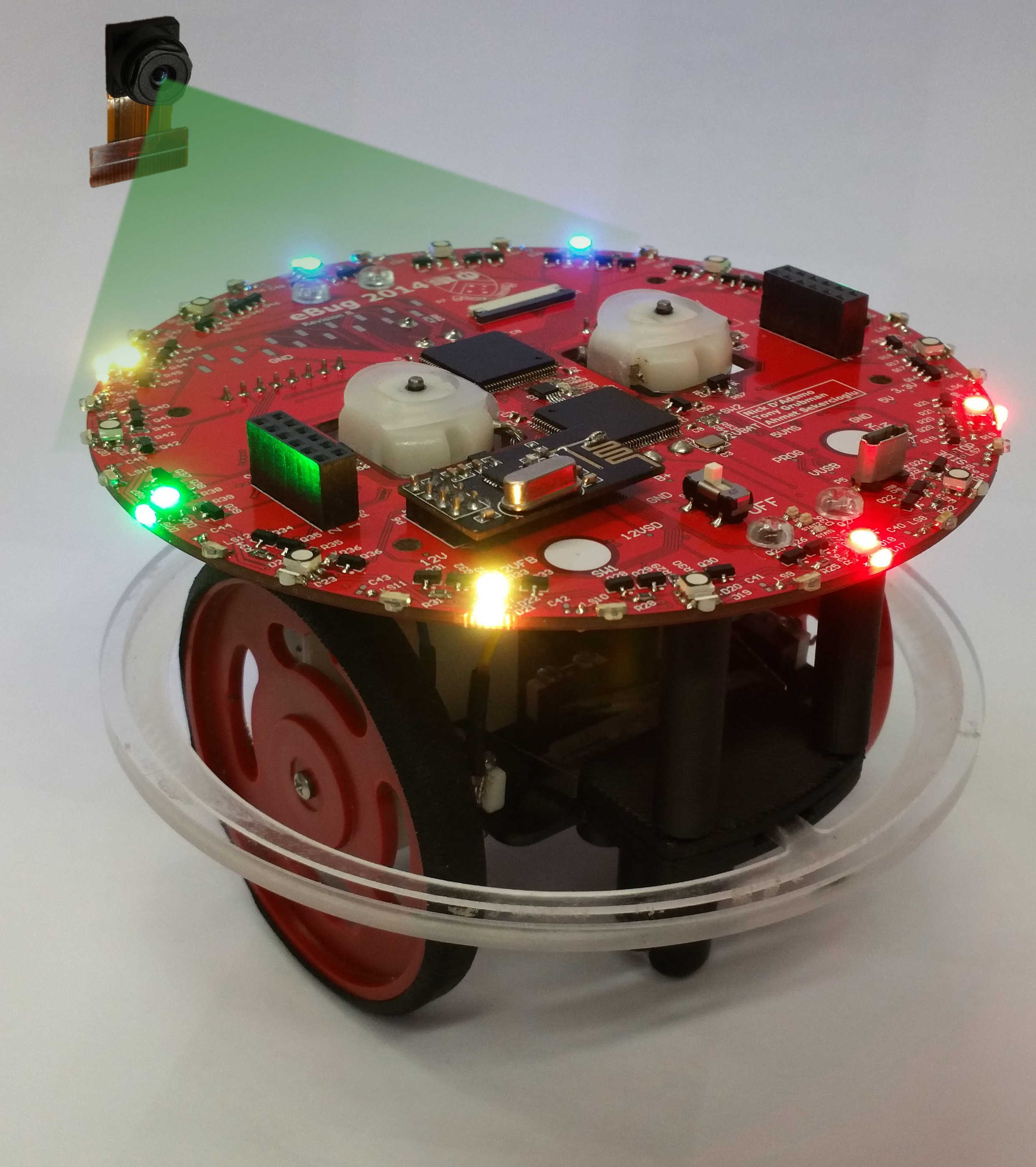}
\caption{An eBug displaying a colour sequence on eight of its sixteen LEDs. The camera viewing the eBug can only see three of these LEDs.}
\label{fig:ebug}
\end{figure}

Figure~\ref{fig:example colouring} shows an example of a colouring of four eBugs. These eBugs have only eight LEDs, and use only two different colours for illustrative purposes. Any subsequence of five LEDs is coloured with a unique pattern. For example, the sequence \tikz[baseline=-0.5ex]{\colorlet{a}{yellow!100!white}\colorlet{b}{blue!50!black}\foreach\colour[count=\i,evaluate=\colour using \colour*100] in {0,0,1,0,1} \node[draw,circle,shading=ball,ball color=a!\colour!b,inner sep=0.08cm] at (\i*0.3,0) {};} appears (counter-clockwise) only on the right side of the second eBug.

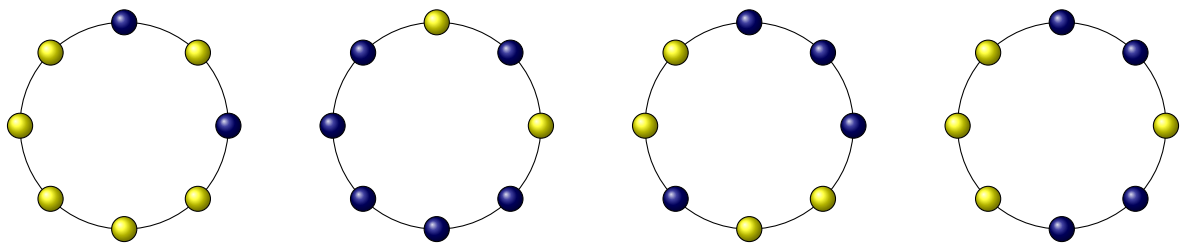
\begin{figure}[htb]
\centering
\begin{tikzpicture}[scale=1.5*\linewidth/18cm]
\colorlet{a}{yellow!100!white}
\colorlet{b}{blue!50!black}
\foreach \x/\seq in {
	0/{1,0,1,1,1,1,1,0},
	1/{0,1,0,0,0,0,0,1},
	2/{0,0,1,1,0,1,1,0},
	3/{0,0,1,1,1,0,0,1}}
	{
		\begin{scope}[xshift=3cm*\x]
		\draw (0,0) circle [radius=1cm];
		\foreach\colour[count=\i,evaluate=\colour using \colour*100] in \seq
			\node[draw,circle,shading=ball,ball color=a!\colour!b,radius=0.1cm] at ({360/8*\i}:1cm) {};
		\end{scope}
	}
\end{tikzpicture}
\caption{Example eBug colouring with $q=2$, $k=8$ and $\ell=5$.}\label{fig:example colouring}
\end{figure}

In a real system, there are limits on the number of colours a camera may reliably distinguish. Similarly, spatial resolution of the camera limits the number of detectable LEDs around each eBug. Therefore, for a given set of parameters, we want to maximise the number of eBugs that we can use in the system. This maximum, the \emph{eBug number}, is formally defined below.

\begin{definition}[eBug number]\label{def:ebug number}
Suppose every eBug has $k$ LEDs, each of which can be illuminated in one of $q$ colours, and suppose further that a camera can reliably detect $\ell\leq k$ consecutive LEDs. An assignment of colours to the LEDs of all eBugs is \emph{$\ell$-valid} if the camera can distinguish each eBug in each of the $k$ orientations. The \emph{eBug number} $\mathcal{E}(q,k,\ell)$ is the maximum number of eBugs for which there exists an $\ell$-valid assignment of colours.
\end{definition}

As well as modelling an actual problem that arises in robot networks, determining eBug numbers is a natural combinatorial problem of independent interest.

Each of the $q^\ell$ possible sequences the camera can see cannot appear more than once, and each eBug uses $k$ distinct sequences. This gives the following upper bound for the eBug number:

\begin{equation}\label{eq:E upper bound}
\mathcal{E}(q,k,\ell)\leq\frac{q^\ell}{k}.
\end{equation}

Colourings that achieve the upper bound in \eqref{eq:E upper bound} are called \emph{optimal}. In such colourings, each sequence of $\ell$ colours appears on some eBug. Note that when $k=\ell$, no $\ell$-sequence of a constant colour can appear on an eBug since all orientations would be identical. Thus optimal colourings can only exist for $k>\ell$.

A lower bound may be obtained by applying the Lov\'{a}sz local lemma \cite{erdos1975,spencer1977}: consider a random colouring of $n$ eBugs, with each of the $nk$ LEDs coloured independently and uniformly at random. For each pair $(i,j)$ of LED sequences (of length $\ell$), let $A_{ij}$ be the event that the same colour sequence has been assigned to $i$ and $j$. Thus the colouring is $\ell$-valid exactly when none of the events $A_{ij}$ occurs. Since there are exactly $nk$ LED sequences, and each sequence overlaps with at most $2\ell-1$ other sequences, each event $A_{ij}$ depends on at most $nk(2\ell-1)-1$ other events. The probability of each $A_{ij}$ is at most $q^{-\ell}$ (less if $i$ and $j$ are overlapping). Therefore, by the local lemma, there is an $\ell$-valid colouring whenever $eq^{-\ell}nk(2\ell-1)\leq 1$, where $e$ is Euler's number. Hence we obtain the following lower bound:

\[\mathcal{E}(q,k,\ell)\geq\left\lfloor\frac{q^\ell}{(2\ell-1)ek}\right\rfloor.\]

For a fixed value of $\ell$, this bound is within a constant factor of the upper bound in \eqref{eq:E upper bound}. In actual camera systems, however, it is reasonable to assume that $\ell$ is proportional to $k$, since a camera can usually detect a fixed arc of the LED circle. Thus the lower bound is rather crude, and ultimately we would like to solve the following problem.

\begin{problem}\label{prob:determine E}
Determine $\mathcal{E}(q,k,\ell)$ exactly.
\end{problem}

For small values of $q$ and $\ell$, a computer search was performed to find large $\ell$-valid colourings. Surprisingly, optimal colourings were found in many cases. These experiments confirm the following conjecture for all $q$ and $\ell$ with $q^\ell\leq 81$. While Problem~\ref{prob:determine E} is likely to be very difficult to solve in general, a mathematically interesting problem is to characterise when optimal colourings exist (hopefully by proving Conjecture~\ref{conj:E always optimal}).

\begin{conjecture}\label{conj:E always optimal}
$\mathcal{E}(q,k,\ell)=\dfrac{q^\ell}{k}$ whenever $k$ divides $q^\ell$ and $k>\ell$.
\end{conjecture}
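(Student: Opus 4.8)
The plan is to prove the graph-theoretic statement equivalent to Conjecture~\ref{conj:E always optimal}: whenever $k$ divides $q^\ell$ and $k>\ell$, the vertex set of $\dB(q,\ell)$ can be partitioned into vertex-disjoint directed $k$-cycles. By the equivalence described in the abstract, such a partition into $q^\ell/k$ cycles is precisely an optimal $\ell$-valid colouring, and the upper bound \eqref{eq:E upper bound} already gives $\mathcal{E}(q,k,\ell)\le q^\ell/k$; hence the entire content is the \emph{existence} of the partition. First I would fix the dictionary between the two pictures---each eBug is a cyclic length-$k$ word, its $k$ windows of length $\ell$ are the vertices of one $k$-cycle, and consecutive windows overlap in $\ell-1$ symbols---so that the hypothesis $k>\ell$ is seen to be exactly what excludes the degenerate constant words, whose $\ell$-windows coincide and which therefore appear only as loops.

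Since $\dB(q,\ell)$ is an Eulerian digraph on $q^\ell$ vertices with all in- and out-degrees equal to $q$, I would exploit its recursive structure to cut down the work. The lifting construction from the abstract produces a partition of $\dB(q,t\ell)$ into $tk$-cycles from one of $\dB(q,\ell)$ into $k$-cycles (for every $t\mid k$), and the exact result for $k=q^{\ell-1}$ supplies a stock of partitions into $q$ equal cycles. The aim is to combine these so that only a small family of ``primitive'' pairs $(\ell,k)$ must be built directly. For prime-power $q$ this looks most promising, since the divisors of $q^\ell$ are generated multiplicatively: it would suffice to realise the prime-power values of $k$ and then propagate.

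For the primitive cases with $q$ a prime power I would turn to linear feedback shift registers over $\GF(q)$. Such a register is a map $T\colon\GF(q)^\ell\to\GF(q)^\ell$ of the form $(x_1,\dots,x_\ell)\mapsto(x_2,\dots,x_\ell,f(x_1,\dots,x_\ell))$, and its orbits are vertex-disjoint cycles of $\dB(q,\ell)$ whose lengths, when $f$ is linear, are dictated by the order of the companion matrix (with a single fixed point at $0$). The trouble is that these lengths are governed by $q^\ell-1$ rather than by $q^\ell$, so a linear register cannot by itself deliver cycles of length $k\mid q^\ell$. I would therefore pass to the \emph{cycle-joining} method from the combinatorics of words: starting from a register with fully understood orbits---for instance the pure cycling register $T(x_1,\dots,x_\ell)=(x_2,\dots,x_\ell,x_1)$, whose orbits are the necklaces, of lengths dividing $\ell$---one merges or splits cycles by interchanging the successors at a pair of states sharing the same length-$(\ell-1)$ suffix, each such swap replacing two cycles by one (or vice versa) in a controlled way. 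Orchestrating these swaps so that every resulting cycle has length exactly $k$, and so that the $q$ constant words are absorbed into genuine $k$-cycles, is what must be engineered.

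The crux is precisely this last orchestration. One is partitioning a prescribed multiset of short cycle lengths (the necklace lengths, all dividing $\ell$) into blocks summing to $k$, while respecting which joins are actually realisable as conjugate-state swaps in $\dB(q,\ell)$, and doing so uniformly across all $q^\ell/k$ target cycles. I expect proving the solvability of this global packing constraint for \emph{every} admissible $k$---and especially for composite $q$, where no finite-field structure is available to organise the cycles---to be the main obstacle, which is consistent with the statement remaining a conjecture rather than a theorem.
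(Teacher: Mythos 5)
The statement you are trying to prove is stated in the paper as a \emph{conjecture}, and the paper offers no proof of it --- only supporting evidence in special cases ($\ell=2$ via Bryant's decomposition of $\dB(q,1)$, $k=q^\ell$ via de~Bruijn sequences, $k=q^{\ell-1}$ via Theorem~\ref{thm:lfsr translation} and Corollary~\ref{cor:dB(q,l) always decomposes into q cycles}, and the families reachable from these by Theorems~\ref{thm:product colouring} and \ref{thm:interleaving}). Your proposal correctly sets up the equivalence with partitioning $\dB(q,\ell)$ into $k$-cycles and correctly identifies the same toolkit the paper uses (LFSRs over $\GF(q)$, the line-digraph lifting, and cycle-joining at states sharing an $(\ell-1)$-factor, which is exactly the paper's necklace-concatenation idea). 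But it is not a proof, and you say as much in your final paragraph: the entire difficulty is concentrated in the ``orchestration'' step, which you leave unresolved.

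To be concrete about where the gap sits. First, the cycle-joining step requires not just that the necklace lengths can be partitioned into blocks summing to $k$ (an arithmetic condition), but that each block is realised by a \emph{connected} subtree of the necklace adjacency graph $N(q,\ell)$; the paper's Proposition~\ref{prop:necklace joining} makes this explicit and has to assume the existence of a suitable spanning forest, which is itself tied to an open Gray-code conjecture. Second, the propagation you describe from the $k=q^{\ell-1}$ base case via interleaving only reaches values of $k$ of the form $tq^{\ell'-1}$ or $tq^{\ell'}$ with constraints on $t$; it does not reach, for example, prime $q$ with $\ell$ prime and $k<q^{\ell-1}$, which the paper's concluding remarks single out as the major remaining gap. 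Third, for composite $q$ the product construction (Theorem~\ref{thm:product colouring}) only produces $k=\lcm(k_1,k_2)$ from colourings it already has, so it cannot manufacture the missing prime-power cases. What you have written is an accurate map of the problem and of why it is hard, but each of these three obstacles would need an actual argument before the conjecture becomes a theorem.
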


This paper provides constructions for some infinite families of optimal colourings, and as such gives evidence to support this conjecture.

In Section~\ref{sec:preliminaries}, Problem~\ref{prob:determine E} is shown to be equivalent to finding many cycles in a \emph{de~Bruijn graph}, with Conjecture~\ref{conj:E always optimal} corresponding to a partition into cycles (see Proposition~\ref{prop:partition conditions}). Existing results about \emph{de~Bruijn sequences} are also discussed.

A well-known algebraic construction of de~Bruijn sequences is given in Section~\ref{sec:LFSRs}; we extend this construction to prove some existence results for eBug colourings. The major result of this section is Theorem~\ref{thm:lfsr translation}, which proves Conjecture~\ref{conj:E always optimal} for infinitely many values.

\begin{restatable}{theorem}{lfsrtranslation}\label{thm:lfsr translation}
	If $q$ is a prime power and $\ell\geq 1$, then $\mathcal{E}(q,q^{\ell-1},\ell)=q$.
\end{restatable}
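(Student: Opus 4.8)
The upper bound $\mathcal{E}(q,q^{\ell-1},\ell)\le q$ is immediate from \eqref{eq:E upper bound}, so the whole content is the matching lower bound: I must exhibit an $\ell$-valid colouring of $q$ eBugs, each with $k=q^{\ell-1}$ LEDs. By Proposition~\ref{prop:partition conditions} this is the same as partitioning the $q^\ell$ length-$\ell$ windows (the vertices of $\dB(q,\ell)$) into $q$ directed cycles of length $q^{\ell-1}$. The plan is to build a single base eBug and obtain the other $q-1$ from it by \emph{additive translation}: adding a constant $c\in\GF(q)$ to every LED colour. Writing $\mathbf 1=(1,\dots,1)$, translation by $c$ shifts every length-$\ell$ window by $c\mathbf 1$, so the $q$ translates partition all windows precisely when the windows of the base eBug form a transversal of the cosets of $\langle\mathbf 1\rangle$ in $\GF(q)^\ell$.

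First I construct the base eBug. The map sending a window $(a_1,\dots,a_\ell)$ to its consecutive differences $(a_2-a_1,\dots,a_\ell-a_{\ell-1})\in\GF(q)^{\ell-1}$ intertwines the de~Bruijn shifts on $\GF(q)^\ell$ and $\GF(q)^{\ell-1}$, and its fibres are exactly the cosets of $\langle\mathbf 1\rangle$. Let $D=D_0D_1\cdots$ be a de~Bruijn sequence of order $\ell-1$ over $\GF(q)$ — a cyclic word of length $q^{\ell-1}$ containing every length-$(\ell-1)$ string once — produced by the LFSR machinery of this section: take an $m$-sequence of order $\ell-1$ and splice in the missing all-zero window. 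Now \emph{integrate} $D$, setting $s_0=0$ and $s_{i+1}=s_i+D_i$. Then $D$ is the difference sequence of $s$, so the consecutive-difference pattern of the window of $s$ at position $i$ is the window of $D$ at position $i$. Since $D$ realises every length-$(\ell-1)$ string once, these patterns are pairwise distinct; hence the windows of $s$ are pairwise distinct (so $s$ is itself a valid eBug) and meet every coset of $\langle\mathbf 1\rangle$ exactly once.

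Finally, the $q$ eBugs $s,\,s+\mathbf 1,\dots,s+(q-1)\mathbf 1$ are each $\ell$-valid, since translation preserves distinctness of windows, and by the transversal property their windows partition $\GF(q)^\ell$; this is the required partition of $\dB(q,\ell)$ into $q$ cycles of length $q^{\ell-1}$, giving $\mathcal{E}(q,q^{\ell-1},\ell)\ge q$. Two points need care and are where the real work lies. The integrated word $s$ closes up into a cycle of length $q^{\ell-1}$ only when $\sum_i D_i=0$; a balance count gives $\sum_i D_i=q^{\ell-2}\sum_{a\in\GF(q)}a$, which vanishes for every prime power except $q=2,\ell=2$ (where $k=\ell$), the lone degenerate case that falls outside this construction and must be handled on its own; the case $\ell=1$ is trivial ($q$ one-LED eBugs in $q$ distinct colours). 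The more substantial obstacle is the LFSR splice used to manufacture $D$: one must verify that inserting the all-zero state genuinely extends the $m$-sequence's single cycle of length $q^{\ell-1}-1$ into one de~Bruijn cycle of length $q^{\ell-1}$ while keeping every transition a legal de~Bruijn shift.
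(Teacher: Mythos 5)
Your proof is correct, and it takes a genuinely different route from the paper's. The paper works one level down via Proposition~\ref{prop:partition conditions}(3): it takes the LFSR cycle of length $q^{\ell-1}-1$ in $\dB(q,\ell-1)$, partitions the edge set into the classes $P_e=\{(\beta,\alpha\beta+\alpha e)\}$, translates the base circuit by the fixed point $\varphi_e=\alpha e/(1-\alpha)$ of each affine map $\xi_e$ to get a circuit inside each $P_e$, and then inserts the $q$ leftover loops $(\varphi_{e+1},\varphi_{e+1})$ to reach length $q^{\ell-1}$. You instead work directly with the vertices of $\dB(q,\ell)$: the difference operator $(a_1,\dots,a_\ell)\mapsto(a_2-a_1,\dots,a_\ell-a_{\ell-1})$ is a graph homomorphism $\dB(q,\ell)\to\dB(q,\ell-1)$ whose fibres are the cosets of the constants (this is Lempel's $D$-morphism), so integrating a full de~Bruijn sequence of order $\ell-1$ and translating by $c\mathbf{1}$ gives the partition. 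Both arguments start from the same LFSR cycle; the paper patches it with $q$ loops \emph{after} translating, while you patch it once with the zero-splice \emph{before} integrating --- and that splice is already verified at the end of Section~\ref{sec:dB sequence from LFSR}, so it is not an outstanding obligation as you suggest. Your route avoids the fixed-point computation, the edge partition and the loop bookkeeping, and it is more general: since de~Bruijn sequences exist for every alphabet size and you only use the additive group structure, running the argument over $\mathbb{Z}_q$ proves the lower bound for \emph{all} $q$ when $\ell\geq 3$ (the multiplier $q^{\ell-2}$ kills the sum), recovering most of Corollary~\ref{cor:dB(q,l) always decomposes into q cycles} without Theorem~\ref{thm:product colouring}. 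What the paper's formulation buys is the translation machinery of Section~\ref{sec:lfsr translations}, set up alongside Section~\ref{sec:lfsr splitting}.

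One correction to your closing remark: the excluded case $(q,\ell)=(2,2)$ cannot ``be handled on its own,'' because the statement is false there. The only $2$-cycle in $\dB(2,2)$ is $01\to 10\to 01$, so $\mathcal{E}(2,2,2)=1$, consistent with the paper's earlier observation that optimal colourings require $k>\ell$. (The paper's own proof also breaks silently at this point, since $\alpha=1$ in $\GF(2)$ makes $\varphi_e$ undefined.) Your construction therefore fails exactly where the theorem does; the exception belongs in the theorem statement, not in your argument.
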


In Section~\ref{sec:necklaces}, we introduce \emph{necklaces} and how they relate to de~Bruijn graphs. We then prove the following theorems, both of which yield large eBug colourings from smaller ones.

\begin{restatable}{theorem}{productcolouring}\label{thm:product colouring}
	Fix a value of $\ell$ and set $\mathcal{E}_1=\mathcal{E}(q_1,k_1,\ell)$ and $\mathcal{E}_2=\mathcal{E}(q_2,k_2,\ell)$. Then \[\mathcal{E}(q_1q_2,\lcm(k_1,k_2),\ell)\geq\gcd(k_1,k_2)\,\mathcal{E}_1\,\mathcal{E}_2.\]
\end{restatable}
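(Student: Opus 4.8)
The plan is to build the large colouring by combining the two given ones position by position, treating the $q_1q_2$ colours as the product set $C_1\times C_2$ of a colour set $C_1$ of size $q_1$ and a colour set $C_2$ of size $q_2$. Fix $\ell$-valid colourings attaining the two eBug numbers: a family of $\mathcal{E}_1$ eBugs over $C_1$, each a cyclic word of length $k_1$, and a family of $\mathcal{E}_2$ eBugs over $C_2$, each a cyclic word of length $k_2$. For a pair $(u,v)$ of such eBugs and a phase offset $s$, I would define a single cyclic word $w$ of length $K:=\lcm(k_1,k_2)$ by interleaving the two periods: $w_i:=(u_{i\bmod k_1},\,v_{(i+s)\bmod k_2})$ for $i=0,\dots,K-1$. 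The offset $s$ is included deliberately, since varying it will produce distinct eBugs and is what generates the factor $\gcd(k_1,k_2)$.

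The key observation is that the length-$\ell$ window of $w$ starting at position $i$ is precisely the pair formed by the length-$\ell$ window of $u$ at position $i\bmod k_1$ and the length-$\ell$ window of $v$ at position $(i+s)\bmod k_2$. Now $\ell$-validity of a colouring says exactly that the map from (eBug, starting position) to its window is injective; so a single window of $w$ determines $u$ and $i\bmod k_1$, and also $v$ and $(i+s)\bmod k_2$.

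With this in hand the heart of the argument is to show that the assignment $(u,v,s,i)\mapsto$ (the window of the corresponding $w$ at position $i$) is injective, where $s$ ranges over $\{0,\dots,g-1\}$ with $g:=\gcd(k_1,k_2)$ and $i$ over $\{0,\dots,K-1\}$. If two tuples give the same window, the component injectivity forces $u=u'$, $v=v'$, $i\equiv i'\pmod{k_1}$ and $i+s\equiv i'+s'\pmod{k_2}$. Reducing these last two congruences modulo $g$ (which divides both $k_1$ and $k_2$) and subtracting yields $s\equiv s'\pmod g$, hence $s=s'$; then $i\equiv i'$ modulo both $k_1$ and $k_2$, so $i\equiv i'\pmod K$ and $i=i'$. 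This single injectivity statement simultaneously shows that each $w$ has $K$ distinct windows (a valid eBug with $K$ LEDs), that no two of the constructed eBugs share a window (so the whole collection is $\ell$-valid), and that the $g\,\mathcal{E}_1\,\mathcal{E}_2$ tuples give distinct eBugs. Hence $\mathcal{E}(q_1q_2,\lcm(k_1,k_2),\ell)\geq g\,\mathcal{E}_1\,\mathcal{E}_2$.

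Finally I would justify restricting $s$ to $\{0,\dots,g-1\}$: rotating $w$ by $k_1$ positions leaves the $u$-component unchanged (it has period $k_1$) and shifts the $v$-component by $k_1$, so among the rotations that constitute an eBug's orientations the offset $s$ matters only modulo the subgroup of $\mathbb{Z}/k_2\mathbb{Z}$ generated by $k_1$, which is $g\mathbb{Z}/k_2\mathbb{Z}$. Thus offsets outside $\{0,\dots,g-1\}$ reproduce eBugs already counted, confirming there are exactly $g$ distinct combined eBugs per pair. I expect the only real obstacle to be this phase bookkeeping—verifying that precisely $g$ offsets yield distinct eBugs and that the congruences close up through the Chinese remainder theorem—rather than any difficult estimate.
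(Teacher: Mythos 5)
Your construction is exactly the paper's: treat the $q_1q_2$ colours as ordered pairs, repeat each word to length $\lcm(k_1,k_2)$, and merge with $\gcd(k_1,k_2)$ relative offsets, which the paper parametrizes as the quotient group $\mathbb{Z}_{k_1}\times\mathbb{Z}_{k_2}/\langle(1,1)\rangle$ of order $\gcd(k_1,k_2)$. The only difference is in the verification of $\ell$-validity, and it is cosmetic: you argue injectivity of $(u,v,s,i)\mapsto\text{window}$ directly via the Chinese remainder theorem, whereas the paper shows each pair of original $\ell$-factors must appear and then counts available positions; both are sound.
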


\begin{restatable}{theorem}{interleaving}\label{thm:interleaving}
	$\mathcal{E}(q,tk,t\ell)\geq\frac{k^{t-1}}{t}\mathcal{E}(q,k,\ell)^t$ whenever $t$ divides $k$.
\end{restatable}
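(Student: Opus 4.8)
The plan is to build large $(q,tk,t\ell)$-colourings by \emph{interleaving} $t$ cyclic colour sequences drawn from an $\ell$-valid $(q,k,\ell)$-colouring. Fix a collection of $\mathcal{E}:=\mathcal{E}(q,k,\ell)$ base eBugs, each a cyclic sequence of length $k$ over $q$ colours, such that the $\mathcal{E}k$ length-$\ell$ windows occurring across the whole collection are pairwise distinct (this is exactly $\ell$-validity, equivalently $\mathcal{E}$ disjoint $k$-cycles in $\dB(q,\ell)$ via Proposition~\ref{prop:partition conditions}). For an ordered $t$-tuple of base eBugs $B^{(0)},\dots,B^{(t-1)}$ together with phases $r_c\in\{0,\dots,k-1\}$, define the interleaved sequence $s$ of length $tk$ by $s_m=B^{(m\bmod t)}_{\lfloor m/t\rfloor+r_{m\bmod t}}$ (lower indices mod $k$). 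First I would record the elementary \emph{de-interleaving} observation: reading off the positions of $s$ congruent to $c$ modulo $t$ recovers exactly $B^{(c)}$ at phase $r_c$, so the assignment $(\text{tuple},\text{phases})\mapsto s$ is a bijection onto the set $X$ of length-$tk$ sequences whose $t$ de-interleaved components are base eBugs; hence $|X|=(\mathcal{E}k)^t$.

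Next I would show that sequences from different rotation classes are mutually distinguishable. The key is that a length-$t\ell$ window $W$ of $s$ de-interleaves cleanly: for each $r\in\{0,\dots,t-1\}$ the entries of $W$ at positions congruent to $r$ modulo $t$ form a \emph{contiguous} length-$\ell$ window of a single base eBug. Since the base collection is $\ell$-valid, each such sub-window determines both which base eBug and which position it came from. Consequently $W$ determines the entire ordered tuple of base-eBug/position pairs, read starting from the offset $p\bmod t$, and therefore pins down $s$ up to rotation and locates $W$ within it. This gives distinguishability across distinct rotation classes, since a shared window would force the two sequences into the same class.

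The heart of the argument, and the step I expect to be the main obstacle, is proving that every $s\in X$ is \emph{aperiodic}, i.e.\ has least period exactly $tk$; this is where the hypothesis $t\mid k$ is essential. Suppose $s$ had a period $d$ with $1\le d<tk$. If $t\mid d$, writing $d=td'$ forces each component $B^{(c)}$ to have period $d'<k$, contradicting aperiodicity of the base eBugs. The difficult case is $t\nmid d$: here rotation by $d$ cyclically permutes the components in orbits of length $t/\gcd(d,t)$, and chasing the induced index shifts around such an orbit shows that some $B^{(c)}$ must equal a nonzero cyclic shift of itself, the accumulated shift being controlled by the carries $\lfloor(m+d)/t\rfloor-\lfloor m/t\rfloor$. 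The delicate computation is to verify that, precisely when $t\mid k$, this accumulated shift is never $0$ modulo $k$, so aperiodicity of the components again yields a contradiction. With aperiodicity established, the cyclic group of order $tk$ acts freely on $X$ by rotation, every orbit has size $tk$, and the orbits are valid, pairwise-distinguishable eBugs; their number is $|X|/(tk)=(\mathcal{E}k)^t/(tk)=\frac{k^{t-1}}{t}\mathcal{E}^t$, giving the claimed bound.
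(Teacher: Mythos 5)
Your construction is essentially the paper's: you interleave $t$ rotated words drawn from an $\ell$-valid base collection, use the same key observation that a length-$t\ell$ window de-interleaves into $t$ contiguous length-$\ell$ windows of the components (which, by $\ell$-validity, pin down the whole interleaved word up to rotation), and arrive at the same count $\frac{k^{t-1}}{t}\mathcal{E}(q,k,\ell)^t$. The only real difference is bookkeeping: the paper selects one canonical representative per rotation class up front, namely the interleavings with $\psi(w_1)=0$ and $\sum_i\psi(w_i)\equiv 0\pmod t$, so that the identity $\rho(\mathcal{I}(w_1,\dots,w_t))=\mathcal{I}(w_2,\dots,w_t,\rho(w_1))$ immediately shows the phase-sum increases by $1$ per rotation and no two representatives are rotations of each other; whereas you count orbits of the full set $X$ under rotation, which forces you to prove aperiodicity of \emph{every} interleaving with arbitrary phases. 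You flag that carry-chasing computation as the main obstacle and leave it as a sketch, but it does close exactly as you predict: writing $d=ta+b$ with $0<b<t$, the orbit of a component index under $i\mapsto i+b$ has length $t/\gcd(b,t)$ and the accumulated shift around it is $(ta+b)/\gcd(b,t)=d/\gcd(b,t)$; if this were $\equiv 0\pmod k$ then $t\mid k$ would force $t\mid d$, contradicting $b\neq 0$, so aperiodicity of the base words kills every period $d<tk$. So your proof is correct once that computation is written out; the paper's choice of $V$ is precisely a device that makes this verification one line instead of a case analysis.
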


These theorems preserve optimality, so we may use them to find optimal colourings for large numbers of eBugs.

\section{Preliminaries}\label{sec:preliminaries}
\subsection{de~Bruijn graphs}
A valid colouring of eBugs has an interesting interpretation as cycles in a de~Bruijn graph. These graphs were discovered independently by de~Bruijn \cite{debruijn1946} and Good \cite{good1946} in 1946.

\begin{definition}
The $\ell$-th order $q$-ary \emph{de~Bruijn graph} $\dB(q,\ell)$ is the digraph $(V,E)$, where $V=\mathbb{Z}_q^\ell$ and $E=\{(a_0a_1\dots a_{\ell-1},a_1a_2\dots a_\ell)\mid a_i\in \mathbb{Z}_q\}$.
\end{definition}

The vertices of $\dB(q,\ell)$ are words of length $\ell$ over an alphabet of size $q$. There is an edge from $u$ to $v$ if shifting $u$ left and appending any letter gives $v$. An example of such a graph is shown in Figure~\ref{fig:example dB}.

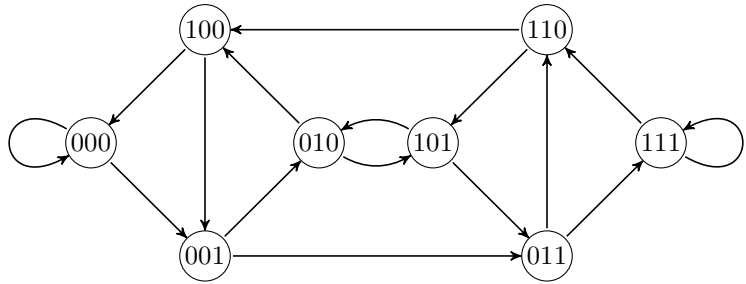
\begin{figure}[htb]
\centering
\begin{tikzpicture}[scale=1]
\SetVertexMath
\tikzset{VertexStyle/.style={shape=circle,fill=white,inner sep=1pt,outer sep=0pt,minimum size=0pt,draw}}
\SetGraphUnit{1.5}
\begin{scope}[xshift=-1.5*\GraphUnit cm]
	\Vertices{circle}{010,100,000,001}
\end{scope}
\begin{scope}[xshift=1.5*\GraphUnit cm]
	\Vertices{circle}{111,110,101,011}
\end{scope}
\Edges[style={post}](000,001,010,100,001,011,111,110,101,011,110,100,000)
\Edges[style={post,bend right}](101,010,101)
\Loop[dist=\GraphUnit*0.8cm,style={out=150,in=-150,thick,post}](000)
\Loop[dist=\GraphUnit*0.8cm,style={out=-30,in=30,thick,post}](111)
\end{tikzpicture}
\caption{Example de~Bruijn graph --- $\dB(2,3)$.}\label{fig:example dB}
\end{figure}

There is also an alternative, equivalent definition of de~Bruijn graphs that involves iteratively taking line digraphs \cite{zhang1987}. In this construction, $\dB(q,1)$ is defined as the complete digraph on $q$ vertices with loops. Higher order de~Bruijn graphs are defined as follows: $\dB(q,\ell+1)$ is the line digraph of $\dB(q,\ell)$. The vertices in $\dB(q,\ell+1)$ correspond to edges in $\dB(q,\ell)$. Note that while cycles in $\dB(q,\ell)$ map directly to cycles in $\dB(q,\ell+1)$, the converse is not always true: there may be repeated vertices when a cycle from $\dB(q,\ell+1)$ is projected down to $\dB(q,\ell)$. The objects in $\dB(q,\ell)$ that correspond to cycles in $\dB(q,\ell+1)$ are called \emph{circuits}, which are closed walks with no repeated edges (vertex repetition is allowed).

\begin{proposition}\label{prop:partition conditions}
The following are equivalent:
\begin{enumerate}
\item $\mathcal{E}(q,k,\ell)=\dfrac{q^\ell}{k}$.
\item There is a partition of the vertex set of $\dB(q,\ell)$ into pairwise disjoint $k$-cycles.
\item There is a partition of the edge set of $\dB(q,\ell-1)$ into pairwise edge-disjoint $k$-circuits.
\end{enumerate}
\end{proposition}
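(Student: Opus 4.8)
The plan is to establish $(1)\Leftrightarrow(2)$ and $(2)\Leftrightarrow(3)$ separately, the second via the line-digraph correspondence already recalled in the excerpt. The crux of everything is a dictionary translating eBugs into closed walks. Given an eBug with LEDs coloured $c_0c_1\dots c_{k-1}$ read around the circle, the $\ell$ consecutive LEDs seen by the camera in orientation $i$ form the word $w_i=c_ic_{i+1}\dots c_{i+\ell-1}$ with indices taken modulo $k$. By the defining edge relation of $\dB(q,\ell)$, each $w_i$ is joined to $w_{i+1}$ (shift left, append $c_{i+\ell}$), so $w_0\to w_1\to\dots\to w_{k-1}\to w_0$ is a closed walk of length $k$. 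The eBug is distinguishable in all $k$ orientations precisely when $w_0,\dots,w_{k-1}$ are pairwise distinct, i.e.\ when this closed walk is a $k$-cycle. Conversely, any $k$-cycle determines a cyclic colour sequence by recording the single letter appended along each edge, and its vertices are exactly the resulting $\ell$-windows. This gives a bijection between individual distinguishable eBugs and $k$-cycles in $\dB(q,\ell)$.

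For $(1)\Leftrightarrow(2)$, I would observe that an $\ell$-valid colouring of $n$ eBugs is exactly a collection of $n$ pairwise vertex-disjoint $k$-cycles, since distinctness of windows across different eBugs is vertex-disjointness of the corresponding cycles. Each cycle uses $k$ of the $q^\ell$ vertices, so $n\le q^\ell/k$, recovering \eqref{eq:E upper bound}; equality $\mathcal{E}(q,k,\ell)=q^\ell/k$ holds iff some family of $q^\ell/k$ disjoint $k$-cycles exists, and such a family necessarily covers all $q^\ell$ vertices, i.e.\ partitions the vertex set. That is precisely statement $(2)$.

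For $(2)\Leftrightarrow(3)$, I would invoke the line-digraph description: $\dB(q,\ell)$ is the line digraph of $\dB(q,\ell-1)$, so the vertices of $\dB(q,\ell)$ are in natural bijection with the edges of $\dB(q,\ell-1)$ (the vertex $a_0\dots a_{\ell-1}$ corresponds to the edge from $a_0\dots a_{\ell-2}$ to $a_1\dots a_{\ell-1}$). A walk in a line digraph is the same data as a walk along the corresponding edges of the base digraph, so a $k$-cycle of $\dB(q,\ell)$ — a closed walk through $k$ distinct vertices — corresponds to a closed walk through $k$ distinct edges of $\dB(q,\ell-1)$, which is exactly a $k$-circuit. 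Vertex-disjointness of cycles upstairs becomes edge-disjointness of circuits downstairs, and covering all vertices upstairs becomes covering all edges downstairs. Hence a vertex partition of $\dB(q,\ell)$ into $k$-cycles is literally the same object as an edge partition of $\dB(q,\ell-1)$ into $k$-circuits.

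The routine part is the index bookkeeping modulo $k$; the one point needing care is the distinction between a closed walk and a genuine cycle. The main obstacle is verifying that $\ell$-validity forces no repeated vertex within a single eBug's walk, so that each eBug really yields a cycle and not merely a circuit — this is exactly the demand that all $k$ orientations be distinguishable, and it is what separates statement $(2)$ (cycles, no vertex repetition) from statement $(3)$ (circuits, where vertex repetition \emph{is} permitted downstairs). Keeping this distinction straight as one passes through the line-digraph correspondence is the subtle step.
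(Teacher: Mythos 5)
Your proposal is correct and follows essentially the same route as the paper: the same dictionary between a single eBug's $k$ orientation windows and a $k$-cycle in $\dB(q,\ell)$ (with $\ell$-validity giving vertex-disjointness and optimality giving a vertex partition), and the same appeal to the line-digraph construction for $(2)\Leftrightarrow(3)$. You spell out the cycle-versus-circuit bookkeeping in more detail than the paper, which dispatches that direction in one line, but the underlying argument is identical.
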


\begin{proof}
($1\Longleftrightarrow 2$)
Suppose that each vertex of $\dB(q,\ell)$ corresponds to a particular camera view of $\ell$ consecutive LEDs on some eBug. Rotating the eBug to the left corresponds to following an edge in the graph, since the LEDs shift to the right and one new LED is visible. Hence a cycle of length $k$ in $\dB(q,\ell)$ corresponds to the colouring of a single eBug with $k$ LEDs. A set of multiple disjoint cycles gives an $\ell$-valid colouring of multiple eBugs (because vertices are not repeated, each orientation is uniquely identifiable), so the eBug number $\mathcal{E}(q,k,\ell)$ equals the maximum number of \emph{disjoint $k$-cycles} in $\dB(q,\ell)$. If every vertex is in one of the $k$-cycles, then each colour sequence appears on some eBug. Conversely, if any given colour sequence can be found on some eBug, then the corresponding vertex is in one of the $k$-cycles. Thus optimal colourings exist exactly when the whole graph can be partitioned into disjoint $k$-cycles.

($2\Longleftrightarrow 3$) The equivalence follows immediately from the line digraph construction.
\end{proof}

Bryant studied edge decompositions of complete directed graphs with loops \cite{bryant1994}, which correspond to the first order de~Bruijn graphs $\dB(q,1)$. The main result of \cite{bryant1994} was that $\dB(q,1)$ can be decomposed into $k$-circuits if and only if $k\geq 3$ and $k$ divides $q^2$. By Proposition~\ref{prop:partition conditions}, this solves Conjecture~\ref{conj:E always optimal} for $\ell=2$.

A similar problem was also posed by Dudeney in 1917 \cite{dudeney1917}, now commonly known as ``Dudeney's round table problem''. This problem is equivalent to finding a set of Hamiltonian cycles in the complete graph $K_n$, such that every path of two edges appears in exactly one of the cycles. Dudeney's problem was solved for even $n$ \cite{kobayashi1993}, and also some other cases such as when $n-1$ is a prime power \cite{nakamura1980}. A generalisation of Dudeney's problem was studied in \cite{kobayashi2002}; here $K_n$ is covered by $k$-cycles with the same property (instead of $n$-cycles). The main difference between these problems and Problem~\ref{prob:determine E} is that we are concerned with directed circuits in digraphs, and that we allow loops on the vertices.

There is a body of research on cycle decompositions of complete graphs (see \cite{bryant2007} for an introduction and \cite{bryant2014} for recent results), and also some work relating to decompositions into fixed-length directed cycles \cite{alspach2003}. The methods used, however, are very specific to the special structure of complete graphs, and cannot be applied to de~Bruijn graphs. There are also results about decomposing de~Bruijn graphs into \emph{variable-length} cycles, using techniques like splitting and merging existing cycles \cite{cohn1972}. Golomb's conjecture, which was proven by Mykkeltviet \cite{mykkeltveit1972}, states that the decomposition of binary de Bruijn graphs into the largest number of disjoint cycles is the decomposition into \emph{necklaces} (see Section~\ref{sec:necklaces} for a definition). These results, unfortunately, cannot easily be applied to help with Conjecture~\ref{conj:E always optimal}, since the specific requirement of fixed-length cycles is quite restrictive.

\subsection{de~Bruijn sequences}
Note that in the de~Bruijn graph $\dB(q,\ell)$, every vertex has in-degree and out-degree $q$. Also, a path can be found from any vertex $u$ to any vertex $v$ by shifting in letters of $v$ one at a time, so the graph is connected. Hence $\dB(q,\ell)$ is Eulerian, and the next de~Bruijn graph $\dB(q,\ell+1)$ is Hamiltonian (since an Eulerian circuit in $\dB(q,\ell)$ is equivalent to a Hamiltonian cycle in $\dB(q,\ell+1)$). This simple fact gives us a starting point for Conjecture~\ref{conj:E always optimal} in the $k=q^\ell$ case: it shows that $\mathcal{E}(q,q^\ell,\ell)=1$ for every $q$ and $\ell$.

Hamiltonian cycles in de~Bruijn graphs are called \emph{de~Bruijn sequences}. The number of $(q,\ell)$-de~Bruijn sequences is
\[\dfrac{(q!)^{q^{\ell-1}}}{q^\ell}.\]

This result is due to van~Aardenne-Ehrenfest and de~Bruijn \cite{vanaardenne-ehrenfest1951}, and uses an equivalence between spanning arborescences and Eulerian circuits in Eulerian digraphs.

There are several known methods for generating de~Bruijn sequences. One construction \cite[\textsection 7.3]{ruskey2003} gives the lexicographically smallest sequence for any given values of $q$ and $\ell$ through clever concatenation of necklaces. This method is described in Section~\ref{sec:necklaces intro}. Another construction involves calculations in finite fields \cite[\textsection 7.7]{ruskey2003}. This only works when $q$ is a prime power, but has a very simple implementation, which is described in Section~\ref{sec:dB sequence from LFSR}.

\section{Linear feedback shift registers}\label{sec:LFSRs}
In this section, algebraic properties of finite fields are exploited to find interesting structures in de~Bruijn graphs. Section~\ref{sec:dB sequence from LFSR} describes a well-known construction of de~Bruijn sequences; we extend this construction further in Sections~\ref{sec:lfsr splitting} and \ref{sec:lfsr translations} to find multiple $k$-cycles in de~Bruijn graphs. We assume that the reader is familiar with elementary group and field theory; see \cite{fraleigh2003} for example.

\subsection{Construction}\label{sec:dB sequence from LFSR}
Let $q$ be a prime power, and choose a primitive element $\alpha$ from the finite field $F:=\GF(q^\ell)$. That is, $\alpha$ generates the multiplicative group $F^*=F\setminus\{0\}$. We may consider $F$ to be an $\ell$-dimensional vector space over $\GF(q)$, in which case $\{1,\alpha,\alpha^2,\dots,\alpha^{\ell-1}\}$ is a basis. In particular, $\alpha^\ell$ can be written as a linear combination of these basis vectors: $\alpha^\ell=p_0+p_1\alpha+\dots+p_{\ell-1}\alpha^{\ell-1}$ (this is called the \emph{minimal polynomial} of $\alpha$ over $\GF(q)$). 

A \emph{linear feedback shift register} (LFSR) is a digital circuit that generates elements of $F^*$ by successive multiplication by $\alpha$. The simplest implementation, a \emph{Galois LFSR}, represents the field elements as vectors in $\GF(q)^\ell$ with respect to the basis $\{1,\alpha,\alpha^2,\dots,\alpha^{\ell-1}\}$. Multiplication of a vector $\bm{v}:=(v_0,v_1\dots,v_{\ell-1})$ by $\alpha$ is simply a shift of the vector to the right, except that an $\alpha^\ell$ term is produced. But $\alpha^\ell$ can be rewritten in terms of the basis vectors, so the multiplication corresponds to the function $\bm{v}\mapsto(0,v_0,v_1,\dots,v_{\ell-2})+v_{\ell-1}(p_0,p_1,\dots,p_{\ell-1})$. Since the new state is a linear transformation of the previous state, this function can be expressed as the matrix equation in $\bm v\mapsto M\bm v$ (over $\GF(q)$), where the \emph{state change matrix}\footnote{In linear algebra, this matrix is also known as the \emph{companion matrix} for the minimal polynomial of $\alpha$.} $M$ is given by

\[M=\begin{pmatrix}
0& 0&\cdots& 0& p_0\\
1& 0&\cdots& 0& p_1\\
0& 1&\cdots& 0& p_2\\
\vdots&\vdots&\ddots&\vdots&\vdots\\
0& 0&\cdots& 1& p_{\ell-1}
\end{pmatrix}.\]

The constants $p_i$ depend on $\alpha$, and are called the \emph{feedback coefficients} for the LFSR. Note that since $F^*$ is generated by $\alpha$, repeatedly applying this operation to some non-zero initial vector generates every non-zero vector in $\GF(q)^\ell$.

A \emph{Fibonacci LFSR} is a similar construction that uses the transposed state change matrix $M^T$. In this configuration, the next state is given by $\bm{v}\mapsto(v_1,v_2,\dots,v_{\ell-1},\sum_{i=0}^{\ell-1} p_iv_i)$. In fact, a Fibonacci LFSR performs the same operation as the corresponding Galois LFSR when the vectors are represented in a different basis. To see this, we must find a matrix $C$ that satisfies $CM=M^TC$.

Let $C$ be defined by $C_{ij}=(M^i)_{0j}$ for $0\leq i<\ell$ and $0\leq j<\ell$ (that is, the $(i,j)$-th entry of $C$ is the $(0,j)$-th entry of $M^i$). The entries of powers of a companion matrix are explicitly known \cite{chen1996}, so we can observe that $C$ is a symmetric matrix. Similarly, since $(CM)_{ij}=(M^{i+1})_{0j}$, $CM$ is also symmetric. Hence $CM=(CM)^T=M^TC^T=M^TC$, so $C$ is a change of basis matrix from $M$ to $M^T$. Note that the first row of $C$ is $\begin{pmatrix}1& 0 &\cdots& 0\end{pmatrix}$, so the first basis vector for the Fibonacci LFSR is also $\alpha^0=1$ (as in the Galois LFSR).

We now show that the Fibonacci LFSR follows edges in the corresponding de~Bruijn graph. From here onwards, we do not use Galois LFSRs and instead only represent field elements in the Fibonacci basis.

\begin{proposition}\label{prop:edges in fibonacci basis}
	Let $F:=\GF(q^\ell)$ and fix a primitive $\alpha\in F^*$. If the elements of $F$ are identified with the vertices of $\dB(q,\ell)$ by expressing them in the Fibonacci basis over $GF(q)$, then $(\beta,\alpha\beta)$ is an edge in $\dB(q,\ell)$ for each $\beta\in F$.
\end{proposition}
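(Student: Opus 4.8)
The plan is to reduce the statement to the explicit next-state formula for the Fibonacci LFSR already derived above, and then match it against the definition of an edge in $\dB(q,\ell)$. First I would record that, once the Fibonacci basis is fixed, multiplication by $\alpha$ is realised by the linear map $\bm v\mapsto M^T\bm v$. This is exactly what the change-of-basis identity $CM=M^TC$ guarantees: if $\bm v$ is the Galois coordinate vector of $\beta$, then $C\bm v$ is its Fibonacci coordinate vector, and the Fibonacci coordinate vector of $\alpha\beta$ is $CM\bm v=M^TC\bm v=M^T(C\bm v)$. Hence, writing everything in Fibonacci coordinates, $\beta\mapsto\alpha\beta$ becomes $\bm v\mapsto M^T\bm v$.

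Next I would compute $M^T\bm v$ directly. Reading off the rows of $M^T$ (the transpose of the companion matrix), the first $\ell-1$ entries of $M^T\bm v$ are $v_1,v_2,\dots,v_{\ell-1}$ and the last entry is $\sum_{i=0}^{\ell-1}p_iv_i$; this is precisely the Fibonacci next-state map $\bm v\mapsto(v_1,v_2,\dots,v_{\ell-1},\sum_{i=0}^{\ell-1}p_iv_i)$ recorded earlier. Identifying $\beta$ with the word $v_0v_1\cdots v_{\ell-1}$ over $\GF(q)$, we see that $\alpha\beta$ is identified with the word $v_1v_2\cdots v_{\ell-1}\,a_\ell$, where $a_\ell:=\sum_{i=0}^{\ell-1}p_iv_i\in\GF(q)$.

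Finally I would invoke the definition of $\dB(q,\ell)$: its edges are exactly the pairs $(a_0a_1\cdots a_{\ell-1},\,a_1a_2\cdots a_\ell)$ obtained by deleting the leading letter, shifting left, and appending an arbitrary new letter $a_\ell\in\GF(q)$. Setting $a_i=v_i$ for $0\le i\le\ell-1$ shows that $(\beta,\alpha\beta)$ has exactly this form, so it is an edge, which completes the argument.

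The computation is routine once the Fibonacci coordinates are in place; the only point requiring genuine care is the bookkeeping of shift direction. The Galois matrix $M$ produces a \emph{right} shift with feedback injected at the top coordinate, which does \emph{not} correspond to the edge relation. It is precisely the transpose $M^T$ — shifting coordinates towards index $0$ and feeding the linear combination $\sum_i p_iv_i$ in at index $\ell-1$ — that reproduces the left-shift-and-append operation defining edges of $\dB(q,\ell)$. This is exactly why the Fibonacci representation is used, and once it is adopted no real obstacle remains.
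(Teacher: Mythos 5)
Your proof is correct and follows essentially the same route as the paper's: the paper's one-line argument simply observes that the Fibonacci next-state map $\bm v\mapsto(v_1,\dots,v_{\ell-1},\sum_i p_iv_i)$ is a left shift with a new letter appended, which is the definition of an edge in $\dB(q,\ell)$. You merely make explicit the change-of-basis justification (via $CM=M^TC$) that the paper establishes in the surrounding text rather than inside the proof itself.
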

\begin{proof}
	Observe that the state change operation $\beta\mapsto\alpha\beta$ in a Fibonacci LFSR corresponds to a left shift of the state vector and an extra term on the end. This is exactly what is required for an edge in $\dB(q,\ell)$.
\end{proof}

Proposition~\ref{prop:edges in fibonacci basis} can be used to describe all edges of $\dB(q,\ell)$ in terms of field operations.

\begin{lemma}\label{lem:adding scalar preserves edges}
	If $\beta\in F$ and $e\in\GF(q)$, then $(\beta+e,\alpha\beta)$ is an edge in $\dB(q,\ell)$.
\end{lemma}
\begin{proof}
	By Proposition~\ref{prop:edges in fibonacci basis}, $(\beta,\alpha\beta)$ is an edge in $\dB(q,\ell)$. Recall that in the construction of the Fibonacci basis, the first (or leftmost) component corresponds to the basis vector $1$ (the multiplicative identity of $F$). Thus adding a scalar $e$ to a vector only changes the first component, which is shifted out when following an edge in $\dB(q,\ell)$. Hence $\beta$ and $\beta+e$ have the same out-neighbours (including $\alpha\beta$).
\end{proof}

Now consider repeatedly applying the state change operation $\beta\mapsto\alpha\beta$ to some initial non-zero field element ($1$ for example). Since $\alpha$ generates $F^*$, the Fibonacci LFSR traverses a cycle of length $q^\ell-1$ in $\dB(q,\ell)$. The missing vertex is $0$, and can always be inserted into this cycle by replacing the edge $(1,\alpha)$ with two edges $(1,0)$ and $(0,\alpha)$. Note that $(1,0)$ and $(0,\alpha)$ are edges by Lemma~\ref{lem:adding scalar preserves edges} (with $\beta=1$, $e=-1$ and $\beta=0$, $e=1$, respectively). Thus we have found a Hamiltonian cycle in $\dB(q,\ell)$, which is a de~Bruijn sequence.

\subsection{Splitting LFSR sequences}\label{sec:lfsr splitting}
Due to the inherently algebraic construction of linear feedback shift registers, the symmetry properties of such sequences may be exploited to produce many cycles of the same length. For this section, we identify vertices of $\dB(q,\ell)$ with the elements of $F=\GF(q^\ell)$ via the Fibonacci basis described above (with respect to a fixed primitive $\alpha\in F^*$).

Fix a value of $k<q^\ell$, and let $\beta_e:=\frac{\alpha e}{\alpha^k-1}$ for each non-zero scalar $e\in\GF(q)^*$. Since $\alpha^{-1}\beta_e+e=\alpha^{k-1}\beta_e$, there is an edge from $\alpha^{k-1}\beta_e$ to $\beta_e$ in $\dB(q,\ell)$ (by Lemma~\ref{lem:adding scalar preserves edges} with $\beta=\alpha^{-1}\beta_e$). We also have $k-1$ other edges $(\alpha^i\beta_e,\alpha^{i+1}\beta_e)$ for $0\leq i<k-1$, so we can form a $k$-cycle $C_e=(\beta_e,\alpha\beta_e,\dots,\alpha^{k-1}\beta_e)$ for each of the $q-1$ values of $e\in\GF(q)^*$. In general, these $q-1$ cycles are not necessarily pairwise disjoint, but we now show that they are if $k$ is small.

\begin{theorem}\label{thm:lfsr splits evenly}
$\mathcal{E}(q,k,\ell)\geq q-1$ for every prime power $q$ and every $k\leq m:=\frac{q^\ell-1}{q-1}$.
\end{theorem}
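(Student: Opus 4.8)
The plan is to invoke Proposition~\ref{prop:partition conditions}, which identifies $\mathcal{E}(q,k,\ell)$ with the maximum number of pairwise disjoint $k$-cycles in $\dB(q,\ell)$. We have already exhibited $q-1$ candidate cycles, namely $C_e=(\beta_e,\alpha\beta_e,\dots,\alpha^{k-1}\beta_e)$ with $\beta_e=\frac{\alpha e}{\alpha^k-1}$, one for each $e\in\GF(q)^*$. Thus the entire theorem reduces to a single claim: when $k\leq m$, these $q-1$ cycles are \emph{pairwise disjoint}. (The only way $\beta_e$ can fail to be defined is $\alpha^k=1$, which within the range $k\leq m$ forces $q=2$ and $k=q^\ell-1$; there only one cycle is needed, and the full LFSR cycle through $F^*$ supplies it.)

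First I would record the vertex sets explicitly. Since $\alpha^i\beta_e=\frac{\alpha^{i+1}e}{\alpha^k-1}$, the vertices of $C_e$ are $\{\frac{\alpha^j e}{\alpha^k-1}:1\leq j\leq k\}$. Clearing the common factor $\frac{1}{\alpha^k-1}$, two cycles $C_e$ and $C_{e'}$ share a vertex precisely when $\alpha^j e=\alpha^{j'}e'$ for some indices $1\leq j,j'\leq k$, equivalently when $\alpha^{\,j-j'}=e'/e$ in $F^*$.

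The crux is then a short field-theoretic counting argument. The key observation is that $\GF(q)^*$ is the unique subgroup of $F^*$ of order $q-1$; since $F^*=\langle\alpha\rangle$ is cyclic of order $q^\ell-1$, this subgroup is exactly $\langle\alpha^m\rangle$, where $m=\frac{q^\ell-1}{q-1}$. Hence $e'/e=\alpha^{mt}$ for some integer $t$ with $0\leq t\leq q-2$, and a shared vertex would require $j-j'\equiv mt\pmod{q^\ell-1}$. I would then bound the left-hand side: since $1\leq j,j'\leq k$ we have $|j-j'|\leq k-1$, and $k\leq m$ gives $|j-j'|<m$, so $j-j'$ reduces modulo $q^\ell-1$ to a residue lying in $[0,k-1]\cup[q^\ell-k,\,q^\ell-2]$. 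Using the identity $(q-2)m=q^\ell-1-m$, one checks that for $1\leq t\leq q-2$ the multiple $mt$ lies in the gap $[m,\,q^\ell-1-m]$ and therefore misses both of those intervals. Consequently the congruence can hold only for $t=0$, i.e.\ $e=e'$, so distinct cycles are disjoint.

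The main obstacle is this final modular-window argument, and it is best understood geometrically: each $C_e$ occupies a block of $k$ consecutive powers of $\alpha$, and distinct cycles are translates of one another by multiples of $m$ in the exponent. The content of the hypothesis $k\leq m$ is exactly that these blocks are short enough to fit between consecutive translates without overlapping. The only genuine care needed is to track the wraparound modulo $q^\ell-1$ correctly, so that a block near the top of the range does not collide with the block near the bottom; the two-sided residue interval above, together with the symmetric bound $q^\ell-1-m\geq mt$, is precisely what rules this out.
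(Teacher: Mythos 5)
Your proposal is correct and follows essentially the same route as the paper: both identify each $C_e$ with a block of $k$ consecutive powers of $\alpha$ (up to the common factor $\frac{1}{\alpha^k-1}$), use the fact that $\GF(q)^*$ is the unique subgroup of $F^*$ of order $q-1$ and hence equals $\langle\alpha^m\rangle$ to show the blocks' starting exponents differ by multiples of $m$, and conclude from $k\leq m$ that the blocks cannot overlap. Your treatment is in fact slightly more careful than the paper's on two minor points --- the explicit two-sided residue window handling the wraparound modulo $q^\ell-1$, and the degenerate case $\alpha^k=1$ where $\beta_e$ is undefined --- but these are refinements of the same argument rather than a different approach.
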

\begin{proof}
Let $\log_\alpha\beta\in\mathbb{Z}_{q^\ell-1}$ be the value of $i$ for which $\alpha^i=\beta$; that is, the discrete logarithm of $\beta$ with base $\alpha$. This is well-defined on all of $F^*$ because $\alpha$ is a generator. Note that $\log_\alpha\beta$ is also the position of $\beta$ in the LFSR sequence (if the initial state is 1). Now consider the relative position of the starting points of two different cycles $C_x$ and $C_y$ as described above, with $x,y\in\GF(q)^*$. The distance along the LFSR sequence between these starting points is

\[\log_\alpha\beta_x-\log_\alpha\beta_y=\log_\alpha\left(\frac{\alpha x}{\alpha^k-1}\right)-\log_\alpha\left(\frac{\alpha y}{\alpha^k-1}\right)=\log_\alpha\left(\frac{x}{y}\right).\]

Note that $\frac{x}{y}\in\GF(q)^*$, which is a subgroup of $F^*$ of order $q-1$. Also note that $(\alpha^m)^{q-1}=1$, and that $im<q^\ell-1=|F^*|$ for $i<q-1$. Thus $\alpha^m$ has order $q-1$. But there is only one subgroup of $F^*$ of order $q-1$ (since $F^*$ is cyclic), so $\frac{x}{y}\in\langle\alpha^m\rangle$. Hence $\frac{x}{y}=(\alpha^m)^j$ for some $j$, so $\log_\alpha(\frac{x}{y})=jm$ is an integer multiple of $m$. Since $k\leq m$, the $k$ consecutive vertices of $C_x$ cannot be in $C_y$, whose starting vertex is at least $m$ places past the start of $C_x$. Hence these $q-1$ $k$-cycles are pairwise disjoint.
\end{proof}

\subsection{Translating LFSRs}\label{sec:lfsr translations}
Fix a scalar $e\in\GF(q)$, and let $\xi_e(\beta):=\alpha\beta+\alpha e$. Note that $\xi_e$ has exactly one fixed point, namely $\varphi_e=\frac{\alpha e}{1-\alpha}$.  Hence
\begin{equation}\label{eq:lfsr edge translation}
\xi_e(\beta+\varphi_e)=\alpha\,(\beta+\varphi_e)+\alpha e=\alpha\beta+\xi_e(\varphi_e)=\xi_0(\beta)+\varphi_e.
\end{equation}

By Lemma~\ref{lem:adding scalar preserves edges}, the $q$ out-neighbours of a vertex $\beta$ are $\{\xi_e(\beta)\mid e\in\GF(q)\}$. Thus we may partition the edges of $\dB(q,\ell)$ into the $q$ parts $P_e:=\{(\beta,\xi_e(\beta))\mid\beta\in\GF(q^\ell)\}$, where $e\in\GF(q)$. Note that \eqref{eq:lfsr edge translation} ensures that $(x,y)\in P_0$ implies $(x+\varphi_e,y+\varphi_e)\in P_e$. Hence if $(\beta_1,\beta_2,\dots,\beta_k,\beta_1)$ is a circuit contained in $P_0$, then $(\beta_1+\varphi_e,\beta_2+\varphi_e,\dots,\beta_k+\varphi_e,\beta_1+\varphi_e)$ is a circuit contained in $P_e$. We call this operation \emph{translating} the circuit by $e$. Note that the $q$ translations of a circuit contained in $P_0$ are pairwise edge-disjoint because the $P_e$ are disjoint.

We are now ready to prove Theorem~\ref{thm:lfsr translation}, which we restate here:

\lfsrtranslation*
\begin{proof}
If $\ell=1$, then the result is trivial since $\dB(q,1)$ contains $q$ loops. Hence we may assume that $\ell\geq 2$.

Let $k=q^{\ell-1}$. By Proposition~\ref{prop:partition conditions}, it is sufficient to find a partition of $\dB(q,\ell-1)$ into $q$ edge-disjoint $k$-circuits.

Recall from Section~\ref{sec:dB sequence from LFSR} that the LFSR sequence is constructed using edges solely of the form $(\beta,\alpha\beta)\in P_0$, and forms a cycle $C_0$ of length $k-1$. This cycle (of vertices) is also a circuit of $k-1$ edges, and we can construct a translated $(k-1)$-circuit $C_e$ in $P_e$ for each scalar $e\in\GF(q)$.

The circuit $C_0$ contains every edge from $P_0$ except the loop $(0,0)=(\varphi_0,\varphi_0)$, which translates to another loop $(\varphi_e,\varphi_e)$ in $P_e$. Since $C_0$ contains every non-zero vertex, we may insert, say, the loop $(\varphi_1,\varphi_1)$ into $C_0$ to obtain a $k$-circuit $\widehat{C}_0$ (note that $\widehat{C}_0$ is no longer a cycle since it contains the vertex $\varphi_1$ twice). Similarly, we may insert the loop $(\varphi_{e+1},\varphi_{e+1})$ into $C_e$ to generate a $k$-circuit $\widehat{C}_e$ for each $e\in\GF(q)$.

Observe that each edge $(\beta,\xi_e(\beta))$ of $\dB(q,\ell-1)$ appears in a unique circuit: if $\beta=\varphi_e$, then the edge is a loop in $\widehat{C}_{e-1}$; otherwise it is in $\widehat{C}_e$. Thus we have a partition of $\dB(q,\ell-1)$ into $q$ edge-disjoint $k$-circuits $\{\widehat{C}_e\mid e\in\GF(q)\}$. Hence $\mathcal{E}(q,q^{\ell-1},\ell)\geq q$.
\end{proof}

For example, we may apply Theorem~\ref{thm:lfsr translation} with $q=3$ and $\ell=4$ to obtain three cycles of length 9. Suppose we choose a primitive $\alpha\in F=\GF(3^3)$ whose minimal polynomial over $\GF(3)$ is $\alpha^3=2+\alpha$. We now construct a 26-cycle $C_0$ in $\dB(3,3)$ by iterating the LFSR starting from vertex $100$, which corresponds to the field element $1$ (the multiplicative identity of $F$). This produces the following cycle of symbols: $C_0=10020212210222001012112011$ (the corresponding cycle of vertices is $100,001,002,\dots,011,111,110$).

Now we may construct the 27-circuit $\widehat{C}_0$ by inserting a loop, say $(111,111)$, into $C_0$. The three translations of $\widehat{C}_0$, shown below in \eqref{eq:translating example}, partition the edge set of $\dB(3,3)$, and hence the corresponding cycles in $\dB(3,4)$ partition the vertex set of $\dB(3,4)$. Thus we have shown that $\mathcal{E}(3,27,4)=3$.

\begin{align}
\widehat{C}_0&=100202122102220010121120111,\nonumber\\
\widehat{C}_1&=211010200210001121202201222,\nonumber\\
\widehat{C}_2&=022121011021112202010012000.\label{eq:translating example}
\end{align}

\subsection{LFSRs from non-primitive elements}\label{sec:non-primitive lfsr}
Suppose that in the construction of the LFSR, we chose a non-primitive element $\beta$ with multiplicative order $k<q^\ell-1$. If $\{1,\beta,\beta^2,\dots,\beta^{\ell-1}\}$ is still a basis of $F=\GF(q^\ell)$ over $\GF(q)$, then vectors with respect to this basis are still in correspondence with field elements. Repeated multiplication by $\beta$, however, no longer generates every element of $F^*$; instead this process traverses the cyclic subgroup of order $k$ generated by $\beta$. Thus the action of the LFSR traces out this subgroup of $F^*$ if the initial state is the identity $1$. This corresponds to a $k$-cycle in $\dB(q,\ell)$.

Choosing a different starting state for the LFSR translates the whole sequence, but does not change the length of the cycle. This gives a partition of the non-zero vertices into $k$-cycles. The number of these cycles is $|F^*/\langle\beta\rangle|=\frac{q^\ell-1}{k}$, giving $\mathcal{E}(q,k,\ell)\geq\frac{q^\ell-1}{k}$.

\begin{theorem}\label{thm:non-primitive lfsrs}
Let $q$ be a prime power, and $k$ a factor of $q^\ell-1$. If $k$ does not divide $q^i-1$ for each $i<\ell$, then
\[\mathcal{E}(q,k,\ell)=\frac{q^\ell-1}{k}.\]
\end{theorem}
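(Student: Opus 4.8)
The plan is to build on the construction from Section~\ref{sec:non-primitive lfsr} and show that under the stated divisibility hypothesis, the lower bound $\mathcal{E}(q,k,\ell)\geq\frac{q^\ell-1}{k}$ is in fact tight, matching the upper bound. The two ingredients are: first, verifying that the non-primitive LFSR construction actually applies, which requires that $\{1,\beta,\dots,\beta^{\ell-1}\}$ is a basis of $F=\GF(q^\ell)$ over $\GF(q)$; and second, establishing a matching upper bound $\mathcal{E}(q,k,\ell)\leq\frac{q^\ell-1}{k}$ that refines the generic bound $\frac{q^\ell}{k}$ from \eqref{eq:E upper bound}.

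First I would address the basis condition. The element $\beta$ of order $k$ generates a subfield-like structure only when its minimal polynomial over $\GF(q)$ has degree exactly $\ell$; the powers $\{1,\beta,\dots,\beta^{\ell-1}\}$ form a basis precisely when $\beta$ is not contained in any proper subfield of $F$. The proper subfields of $\GF(q^\ell)$ are exactly the $\GF(q^i)$ for $i\mid\ell$, $i<\ell$, and their multiplicative groups have orders $q^i-1$. An element of order $k$ lies in $\GF(q^i)$ if and only if $k\mid q^i-1$. The hypothesis that $k\nmid q^i-1$ for every $i<\ell$ (I would note it suffices to check the proper divisors $i$ of $\ell$) therefore guarantees that $\beta$ lies in no proper subfield, so its minimal polynomial has degree $\ell$ and the required basis exists. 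This gives the lower bound via the partition of the $q^\ell-1$ nonzero vertices into cosets of $\langle\beta\rangle$, each tracing a $k$-cycle.

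The main obstacle is the upper bound, since the naive count \eqref{eq:E upper bound} only yields $\frac{q^\ell}{k}$ and we must shave off one cycle's worth. The key observation is that the all-zero vertex $0$ of $\dB(q,\ell)$ carries a loop, and any $k$-cycle through it would consist of a single looped vertex repeated, which is impossible for $k\geq 2$ (or more carefully, the zero vertex cannot participate in the algebraic cycle structure since $\beta\cdot 0=0$). I would argue that in \emph{any} $\ell$-valid colouring with $k>\ell$, the constant sequence corresponding to the vertex $0$ cannot appear on any eBug, because a $k$-cycle containing $0$ would force a repeated vertex. Hence at most $q^\ell-1$ vertices can be covered by disjoint $k$-cycles, giving $\mathcal{E}(q,k,\ell)\leq\frac{q^\ell-1}{k}$. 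Here I would need to confirm $k>\ell$ follows from the hypotheses, or handle the edge cases directly; since $k\mid q^\ell-1$ and $k\nmid q^i-1$ for $i<\ell$, one checks $k>q^{\ell-1}-1\geq\ell$ for the relevant ranges, placing us safely in the regime where constant sequences are forbidden.

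Combining the two bounds yields $\mathcal{E}(q,k,\ell)=\frac{q^\ell-1}{k}$ exactly. I expect the delicate part to be the careful justification that no $k$-cycle can use the zero vertex together with a clean argument that all the nonzero vertices genuinely \emph{are} covered — the coset decomposition handles the latter automatically, but writing the upper bound rigorously requires isolating why the single excluded vertex is forced, rather than merely convenient.
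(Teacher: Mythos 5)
Your lower bound is exactly the paper's argument: the hypothesis that $k\nmid q^i-1$ for $i<\ell$ rules out $\beta$ lying in a proper subfield, so $\{1,\beta,\dots,\beta^{\ell-1}\}$ is a basis and the cosets of $\langle\beta\rangle$ in $F^*$ give $\frac{q^\ell-1}{k}$ disjoint $k$-cycles. That part is correct and matches the paper.

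The upper bound, however, contains a genuine error. You claim that in any $\ell$-valid colouring with $k>\ell$ the all-zero vertex cannot lie on a $k$-cycle because it ``would force a repeated vertex.'' This is false: a $k$-cycle through $0\cdots0$ need not use the loop at that vertex. For instance $000\to001\to010\to100\to000$ is a $4$-cycle in $\dB(2,3)$ through the zero vertex; more tellingly, Theorem~\ref{thm:lfsr translation} partitions \emph{all} $q^\ell$ vertices of $\dB(q,\ell)$, including $0$, into $q^{\ell-1}$-cycles, and Conjecture~\ref{conj:E always optimal} is precisely about colourings that cover every vertex. The constant word is excluded only in the case $k=\ell$. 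Your fallback inequality $k>q^{\ell-1}-1$ is also false: a Zsigmondy prime $p$ dividing $q^\ell-1$ but no $q^i-1$ satisfies the hypotheses and is typically far smaller than $q^{\ell-1}-1$ (e.g.\ $q=2$, $\ell=4$, $k=5<7$), and the paper explicitly advertises such small $k$. Fortunately none of this machinery is needed: the correct upper bound is the trivial counting one, which is what the paper uses. Disjoint $k$-cycles in $\dB(q,\ell)$ cover at most $q^\ell$ vertices, so there are at most $\left\lfloor q^\ell/k\right\rfloor$ of them; since $k\mid q^\ell-1$ and $k>1$, we have $q^\ell\equiv 1\pmod{k}$, hence $\left\lfloor q^\ell/k\right\rfloor=\frac{q^\ell-1}{k}$. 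Replacing your upper-bound paragraph with this one-line observation completes the proof.
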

\begin{proof}
Since $k$ divides $q^\ell-1$, there is an element $\beta$ of multiplicative order $k$ in $F=\GF(q^\ell)$. Since $k$ does not divide $q^i-1$ for $i<\ell$, $\beta$ is not in any subfield $\GF(q^i)$ of $F$. Hence $\beta$ is not the root of any polynomial over $\GF(q)$ of degree $d<\ell$. Therefore $\{1,\beta,\beta^2,\dots,\beta^{\ell-1}\}$ is linearly independent over $\GF(q)$, and hence a basis of $F$.

Therefore, the LFSR generated by $\beta$ traces out a distinct $k$-cycle in $\dB(q,\ell)$ for each equivalence class of $F^*/\langle\beta\rangle$. We have found $\frac{q^\ell-1}{k}$ disjoint $k$-cycles in $\dB(q,\ell)$. Since the $k$-cycles cover all but one vertex in $\dB(q,\ell)$ and $k>1$, this is the best possible bound.
\end{proof}

By Zsigmondy's theorem \cite{zsigmony1892}, there is a prime $p$ that divides $q^\ell-1$ but not $q^i-1$ for $i<\ell$ for any $q$ and $\ell$, except when $(q,\ell)=(2,6)$ or $\ell=2$ and $q$ is a Mersenne prime (that is, $q=2^{p'}-1$ for some prime $p'$). Thus Theorem~\ref{thm:non-primitive lfsrs} can be applied with $k=p$ to obtain an almost optimal eBug colouring with $p$ LEDs on each eBug (only one colour sequence is unused). Furthermore, if larger eBugs are desired for the same values of $q$ and $\ell$, any multiple of $p$ that divides $q^\ell-1$ can also be used for $k$.

\section{Necklaces}\label{sec:necklaces}
This section focuses on the combinatorics of words to find and combine cycles in a de~Bruijn graph. A \emph{word} of length $k$ over an alphabet $A$ is a sequence of $k$ \emph{letters}, each of which is an element of $A$. We often use the \emph{left rotation} operation $\rho$, which cyclically permutes the order of letters in a word: $\rho(a_1a_2\dots a_k):=a_2a_3\dots a_k a_1$. We may rotate a word by any amount by repeatedly applying $\rho$; $\rho^i$ rotates a word by $i$ places to the left.

Usually, a \emph{factor} $f$ of a word $w$ is defined as any block of consecutive letters in $w$, and $f$ is a \emph{prefix} if it appears at the start of $w$. In this case, if $w$ has length $k$, there are at most $k-\ell$ factors of length $\ell$ (or $\ell$-factors) of $w$. For this section, we allow factors to ``wrap around'', so that $f$ is a factor of $w$ if and only if it is a prefix of some rotation $\rho^i(w)$. This way, it is possible to have $k$ different $\ell$-factors of a word of length $k$.

\subsection{Necklaces and de~Bruijn graphs}\label{sec:necklaces intro}
\begin{definition}
	A $q$-ary \emph{necklace} is an equivalence class of words over the alphabet $\mathbb{Z}_q$ under cyclic rotation $\rho^i$. The \emph{length} of a necklace is the length of any word in the class, while the \emph{size} of a necklace is the number of words in the class. A necklace with equal length and size is called \emph{aperiodic}.
\end{definition}

Every word in a $q$-ary length $\ell$ necklace corresponds to a vertex in the de~Bruijn graph $\dB(q,\ell)$, and a cyclic rotation corresponds to following an edge in this graph. Thus a size $t$ necklace can be thought of as a $t$-cycle in $\dB(q,\ell)$. Note that every vertex is part of some necklace, so the vertex set of $\dB(q,\ell)$ can be partitioned into necklaces.

For a fixed $q$ and $\ell$, the possible necklace sizes in $\dB(q,\ell)$ are the divisors of $\ell$. Moreau's necklace counting function \cite{moreau1872}, shown below in \eqref{eq:moreau}, gives the number of $q$-ary size $t$ necklaces, and is defined in terms of the M\"obius function $\mu$.
\begin{equation}\label{eq:moreau}
M(q,t)=\frac{1}{t}\sum\limits_{d\mid t}\mu\left(\frac{t}{d}\right)q^{d}.
\end{equation}

The total number of length $\ell$ necklaces is more easily calculated using Euler's totient function, $\varphi$:
\[Z(q,\ell)=\sum\limits_{t\mid\ell}M(q,t)=\frac{1}{\ell}\sum\limits_{d\mid\ell}\varphi\left(\frac{\ell}{d}\right)q^{d}.\]

When $\ell$ is prime, there are exactly $q$ necklaces of size $1$ (the constant words); the remainder of the necklaces have size $\ell$. Thus there are $\frac{q^\ell-q}{\ell}$ disjoint $\ell$-cycles in $\dB(q,\ell)$. Hence $\mathcal{E}(q,\ell,\ell)\geq\frac{q^\ell-q}{\ell}$ for any prime $\ell$. Note that when $\ell>q$, this lower bound is tight since there are less than $\ell$ remaining vertices in $\dB(q,\ell)$.

A \emph{Lyndon word} is the lexicographically smallest representative of an aperiodic necklace. It is possible to construct a de~Bruijn sequence for $\dB(q,\ell)$ by concatenating all $q$-ary Lyndon words whose length divides $\ell$ in lexicographic order. In fact, the sequence that is generated is the lexicographically smallest de~Bruijn sequence of the given order \cite{fredricksen1970}.

\subsection{Multiplying necklaces}\label{sec:product colouring}
Suppose we have two systems of coloured eBugs, where each colouring is $\ell$-valid. In this section, we describe a type of direct product that yields many more eBugs at the expense of using more colours. The result is summarised in Theorem~\ref{thm:product colouring}.

Instead of modelling the colouring problem with de~Bruijn graphs, we find a set of necklaces of length $k$ that correspond to the disjoint $k$-cycles in $\dB(q,\ell)$. The definition of an \emph{$\ell$-valid} set of necklaces translates directly from Definition~\ref{def:ebug number}.

\productcolouring*
\begin{proof}
We first demonstrate this proof for the special case of $k_1=k_2=k$ (so $\gcd(k_1,k_2)=\lcm(k_1,k_2)=k$), and then show that the construction can be extended to the general case. The construction describes a one-to-$k$ mapping from pairs of necklaces to necklaces with $q_1q_2$ colours.

In order to construct necklaces over a larger alphabet, we use pairs of letters (colours) as the letters in the resulting necklaces. We define a \emph{merging} operation $\mathcal{M}$ that pairs corresponding letters from two words of the same length: if $a:=a_1a_2\dots a_k$ and $b:=b_1b_2\dots b_k$, then $\mathcal{M}(a,b)=(a_1,b_1)(a_2,b_2)\dots(a_k,b_k)$.

Let $N_i$ be an $\ell$-valid set of $\mathcal{E}_i$ $q_i$-ary necklaces of length $k$, for $i=1,2$. For each necklace $n\in N_1\cup N_2$, choose a representative word $w_n$. Now, for a pair of necklaces $(n_1,n_2)\in N_1\times N_2$, we construct $k$ new words $\mathcal{M}(w_{n_1},\rho^i(w_{n_2}))$ over $\mathbb{Z}_{q_1}\times\mathbb{Z}_{q_2}$, where $i\in\mathbb{Z}_k$. Note that we only rotate one of the words, since rotating both by the same amount creates a word that is equivalent under cyclic rotation (it is only the relative rotation that matters). An example of this process with $k=8$, $\ell=3$ and $q_1=q_2=2$ is illustrated in Figure~\ref{fig:product colouring}.

\begin{figure}[htb]
\centering
\adjustbox{valign=M}{
\begin{tikzpicture}[scale=\linewidth/18cm]
\colorlet{a}{yellow!100!white}
\colorlet{b}{blue!50!black}
\draw (0,0) circle [radius=2.25cm];
\foreach\colour[count=\i,evaluate=\colour using \colour*100] in {0,0,0,1,0,1,1,1}
	\node[draw,circle,shading=ball,ball color=a!\colour!b,radius=0.1cm] at ({360/8*\i}:2.25cm) {};
\draw (0,0) circle [radius=1.5cm];
\foreach\colour[count=\i,evaluate=\colour using \colour*100] in {0,1,0,1,1,1,0,0}
	\node[draw,circle,shading=ball,ball color=a!\colour!b,radius=0.1cm] at ({360/8*\i}:1.5cm) {};
\end{tikzpicture}
}
\adjustbox{valign=M}{
\begin{tikzpicture}[scale=\linewidth/18cm]
\colorlet{a}{yellow!100!white}
\colorlet{b}{blue!50!black}
\def\outerseq{0,0,0,1,0,1,1,1}
\foreach\x/\y/\seq in {
	0/0/{0,0,0,1,0,1,1,1},
	1/0/{0,0,1,0,1,1,1,0},
	2/0/{0,1,0,1,1,1,0,0},
	3/0/{1,0,1,1,1,0,0,0},
	0/1/{0,1,1,1,0,0,0,1},
	1/1/{1,1,1,0,0,0,1,0},
	2/1/{1,1,0,0,0,1,0,1},
	3/1/{1,0,0,0,1,0,1,1}}
{
	\begin{scope}[xshift=3cm*\x,yshift=-3cm*\y]
	\draw (0,0) circle [radius=1cm];
	\foreach\colour[count=\i,evaluate=\colour using \colour*100] in \seq
	{
		\node[draw,circle,inner sep=0.1cm] at (360/8*\i:1cm) {};
		\begin{scope}
		\clip ($(360/8*\i:1cm)+(0.5cm,-0.25cm)$) rectangle ($(360/8*\i:1cm)+(0,0.25cm)$);
		\node[circle,shading=ball,ball color=a!\colour!b,inner sep=0.1cm] at (360/8*\i:1cm) {};
		\end{scope}
	}
	\foreach\colour[count=\i,evaluate=\colour using \colour*100] in \outerseq
	{
		\begin{scope}
		\clip ($(360/8*\i:1cm)+(-0.5cm,-0.25cm)$) rectangle ($(360/8*\i:1cm)+(0,0.25cm)$);
		\node[circle,shading=ball,ball color=a!\colour!b,inner sep=0.1cm] at (360/8*\i:1cm) {};
		\end{scope}
	}
	\end{scope}
}
\end{tikzpicture}
}
\caption{Two necklaces being multiplied to produce many new necklaces. Ordered pairs of colours are used to specify the new colours in the resulting necklaces.}\label{fig:product colouring}
\end{figure}
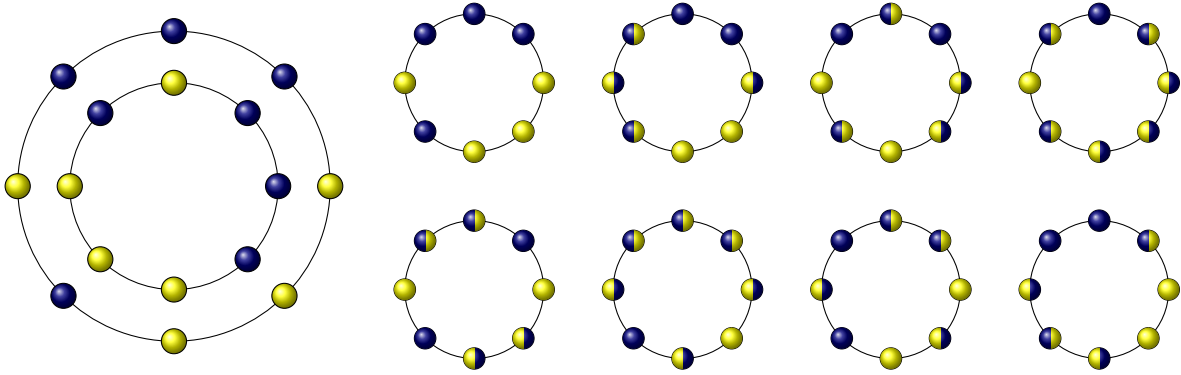

This process can be performed for every pair $(n_1,n_2)\in N_1\times N_2$, generating $k$ new words every time. Hence it is clear that $k\mathcal{E}_1\mathcal{E}_2$ words are produced, and that $q_1q_2$ colours are used. Thus it remains only to show that the set of corresponding necklaces is $\ell$-valid.

In each of the original necklaces $n\in N_1\cup N_2$, there are $k$ different $\ell$-factors (since the $N_i$ were $\ell$-valid). Thus the total number of distinct $\ell$-factors in $N_i$ is $k\mathcal{E}_i$. Suppose the word $a:=a_1a_2\dots a_\ell$ occurs in the necklace $n_a\in N_1$, and $b:=b_1b_2\dots b_\ell$ occurs in $n_b\in N_2$. There is a unique $i$ such that $w_{n_a}$ and $\rho^i(w_{n_b})$ have $a$ and $b$ aligned, so $\mathcal{M}(w_{n_a},\rho^i(w_{n_b}))$ must contain the factor $(a_1,b_1)(a_2,b_2)\dots(a_\ell,b_\ell)$. Since there are $k\mathcal{E}_1\times k\mathcal{E}_2$ pairs of $\ell$-factors from the original necklaces, there must be at least $k\mathcal{E}_1\times k\mathcal{E}_2$ distinct $\ell$-factors in the set of merged words. But there are only $k$ possible $\ell$-factors in each of the $k\mathcal{E}_1\mathcal{E}_2$ merged words, so each $\ell$-factor must appear exactly once. Therefore the set of necklaces corresponding to the merged words is $\ell$-valid.

To generalise to the case where $k_1\neq k_2$, we can traverse the original necklaces multiple times to obtain words of length $\lcm(k_1,k_2)$. For a necklace $n\in N_i$, pick a representative word of $n$ and repeat it $\frac{\lcm(k_1,k_2)}{k_i}$ times to obtain $w_n$. This way we may still merge words from $N_1$ and $N_2$ using $\mathcal{M}$ (since they are the same length).

To model rotations of $w_{n_1}$ and $w_{n_2}$, we act on the pair with elements of the group $\mathbb{Z}_{k_1}\times\mathbb{Z}_{k_2}$. Simultaneous rotation of $w_{n_1}$ and $w_{n_2}$ (by the same amount) only rotates the merged word that is produced, so we may identify unique merged words with elements of the quotient group $R:=\mathbb{Z}_{k_1}\times\mathbb{Z}_{k_2}/\langle(1,1)\rangle$. Hence each pair $(w_{n_1},w_{n_2})$ produces $|R|=\frac{k_1k_2}{\lcm(k_1,k_2)}=\gcd(k_1,k_2)$ unique words, for a total of $\gcd(k_1,k_2)\,\mathcal{E}_1\mathcal{E}_2$ words.

It remains to show that the set of merged words is $\ell$-valid. The number of $\ell$-factors that appear in $N_i$ is $k_i\mathcal{E}_i$. As before, suppose the $\ell$-factor $a$ occurs in $n_a\in N_1$ and $b$ occurs in $n_b\in N_2$. Define $i$ and $j$ so that $a$ is the $\ell$-prefix of $\rho^i(w_{n_a})$, and $b$ is the $\ell$-prefix of $\rho^j(w_{n_b})$. Note that rotating both words together keeps $a$ and $b$ aligned, so the pair $(i,j)$ corresponds to a unique element of $R$, and thus $\mathcal{M}(a,b)$ appears in one of the merged words. Hence there are at least $k_1\mathcal{E}_1\times k_2\mathcal{E}_2$ distinct $\ell$-factors in the set of merged words. The number of possible positions for these $\ell$-factors is the product of the number of merged words and the length of each word. This number is $\gcd(k_1,k_2)\,\mathcal{E}_1\mathcal{E}_2\times\lcm(k_1,k_2)=k_1k_2\mathcal{E}_1\mathcal{E}_2$, so all $\ell$-factors are unique, and the set of corresponding necklaces is $\ell$-valid. Hence $\mathcal{E}(q_1q_2,\lcm(k_1,k_2),\ell)\geq\gcd(k_1,k_2)\,\mathcal{E}_1\,\mathcal{E}_2$.
\end{proof}

The conditions in Theorem~\ref{thm:product colouring} guarantee that if the original colourings are optimal, then the resulting colouring is also optimal. This allows a result for prime powers, such as Theorem~\ref{thm:lfsr translation}, to be extended to any integer by repeated application of Theorem~\ref{thm:product colouring} (after applying Theorem~\ref{thm:lfsr translation} for each prime power factor in the prime decomposition of $q$).

\begin{corollary}\label{cor:dB(q,l) always decomposes into q cycles}
$\mathcal{E}(q,q^{\ell-1},\ell)=q$ for all $q$ and $\ell$.
\end{corollary}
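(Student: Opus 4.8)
The plan is to reduce the general statement to the prime power case already established in Theorem~\ref{thm:lfsr translation}, exploiting the multiplicative structure provided by Theorem~\ref{thm:product colouring}. First I would write $q$ in terms of its prime factorisation as $q=q_1q_2\cdots q_r$, where each $q_i=p_i^{a_i}$ is a prime power and the $p_i$ are distinct primes. For each factor, Theorem~\ref{thm:lfsr translation} supplies an optimal colouring with $\mathcal{E}(q_i,q_i^{\ell-1},\ell)=q_i$, and these are the building blocks to be combined.

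The key step is to merge these using Theorem~\ref{thm:product colouring} one factor at a time. The crucial observation is that when the alphabet sizes $q_1$ and $q_2$ come from coprime prime powers, the corresponding cycle lengths $k_1=q_1^{\ell-1}$ and $k_2=q_2^{\ell-1}$ are themselves coprime. Hence $\gcd(k_1,k_2)=1$ and $\lcm(k_1,k_2)=k_1k_2=(q_1q_2)^{\ell-1}$, so Theorem~\ref{thm:product colouring} yields
\[
\mathcal{E}(q_1q_2,(q_1q_2)^{\ell-1},\ell)\geq\gcd(k_1,k_2)\,\mathcal{E}(q_1,q_1^{\ell-1},\ell)\,\mathcal{E}(q_2,q_2^{\ell-1},\ell)=q_1q_2.
\]
This lower bound matches the upper bound $(q_1q_2)^\ell/(q_1q_2)^{\ell-1}=q_1q_2$ from \eqref{eq:E upper bound}, forcing equality.

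I would then formalise an induction on the number $r$ of distinct prime factors. The base case $r=1$ is exactly Theorem~\ref{thm:lfsr translation}. For the inductive step, setting $q'=q_1\cdots q_{r-1}$, the inductive hypothesis gives $\mathcal{E}(q',(q')^{\ell-1},\ell)=q'$; since $q'$ and $q_r$ are coprime, so are $(q')^{\ell-1}$ and $q_r^{\ell-1}$, and I may repeat the computation above with $q_1\mapsto q'$ and $q_2\mapsto q_r$ to obtain $\mathcal{E}(q,q^{\ell-1},\ell)=q$.

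There is no genuine obstacle here: all the difficulty has been absorbed into the two cited theorems. The only point requiring care is the bookkeeping of parameters — at each step I must verify that the new cycle length is exactly $(\text{new alphabet size})^{\ell-1}$, which is precisely what $\lcm(k_1,k_2)=(q_1q_2)^{\ell-1}$ guarantees, and that the $\gcd$ factor contributed by Theorem~\ref{thm:product colouring} equals $1$ rather than something larger, which is where coprimality of the prime power factors is essential. Since both the product construction and the prime power construction preserve optimality, the resulting colouring is optimal, completing the argument.
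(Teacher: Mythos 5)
Your argument is exactly the paper's intended proof of this corollary: factor $q$ into prime powers, apply Theorem~\ref{thm:lfsr translation} to each factor, and combine them by repeated application of Theorem~\ref{thm:product colouring}, noting that coprime alphabet sizes give $\gcd(k_1,k_2)=1$ and $\lcm(k_1,k_2)=(q_1q_2)^{\ell-1}$ so that optimality (equality with the bound in \eqref{eq:E upper bound}) is preserved. The paper states this only as a one-line remark before the corollary; your version just fills in the same bookkeeping explicitly.
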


On the other hand, if the original colourings are not optimal, the product colouring may be even ``less'' optimal. For example, the construction in Section~\ref{sec:non-primitive lfsr} is almost optimal since it uses all but one vertex of $\dB(q_1,\ell)$. If we use Theorem~\ref{thm:product colouring} to multiply this with an optimal colouring (where the $k$-cycles use every vertex of $\dB(q_2,\ell)$), there would be $q_2^\ell$ vertices of $\dB(q_1q_2,\ell)$ not used by a cycle in the product colouring.

\subsection{Interleaving necklaces}\label{sec:interleaving}
\interleaving*
\begin{proof}
	Let $N$ be an $\ell$-valid set of $\mathcal{E}(q,k,\ell)$ necklaces of length $k$ over the alphabet $\mathbb{Z}_q$. For each necklace $n\in N$, fix a specific representative word $w_n$. We now construct the set $W\subseteq \mathbb{Z}_q^k$ of all words that appear in some necklace from $N$:
	\[W:=\{\rho^i(w_n)\mid n\in N,i\in\mathbb{Z}_k\}.\]
	
	Since $N$ is $\ell$-valid, the necklaces of $N$ are aperiodic. Thus we may uniquely identify how far each word in $W$ is rotated from its representative $w_n$. For each word $w=\rho^i(w_n)\in W$, define $\psi(w):=i\in\mathbb{Z}_k$.
	
	We now define a function $\mathcal{I}$ that interleaves the letters of multiple words to construct a single long word.
	\[\mathcal{I}(a_{11}a_{12}\dots a_{1k},a_{21}a_{22}\dots a_{2k},\dots,a_{t1}a_{t2}\dots a_{tk}):=a_{11}a_{21}\dots a_{t1}a_{12}a_{22}\dots a_{t2}a_{13}\dots a_{tk}.\]
	
	Note that $\mathcal{I}$ is injective, since we can deinterleave the resulting word to recover the original words. We write $\mathcal{I}^{-1}$ for this deinterleaving function, which gives a $t$-tuple of $k$ words from a single word of length $tk$. For brevity, we write $t$-tuples of words in boldface as $\bm{w}:=(w_1,w_2,\dots,w_t)\in W^t$. We also extend $\psi$ to operate on $t$-tuples, so we may write $\psi(\bm{w}):=(\psi(w_1),\psi(w_2),\dots,\psi(w_t))$.
	
	Now consider the set $V:=\{\mathcal{I}(\bm{w})\mid\bm{w}\in W^t,\psi(w_1)=0,\sum_i \psi(w_i)\equiv 0\mod{t}\}$. Note that the $\psi(w_i)$ are in $\mathbb{Z}_k$ and $t$ divides $k$, so taking the sum modulo $t$ is well-defined. Simple arithmetic shows that $|V|=\frac{|W|^t}{kt}=\frac{k^{t-1}}{t}|N|^t$. We claim that $V$ is a $t\ell$-valid set of words.
	
	Take any $v=\mathcal{I}(\bm{w})$ from $V$, and observe the following property of the interleaving function:
	\begin{equation}\label{eq:rotate interleaved word}
		\rho(v)=\mathcal{I}(w_2,w_3,\dots,w_t,\rho(w_1)).
	\end{equation}
	
	Thus if $\psi(\bm{w})=(z_1,z_2,\dots,z_t)$, then $\psi(\mathcal{I}^{-1}(\rho(v)))=(z_2,z_3,\dots,z_t,z_1+1)$. Hence rotating $v$ to the left increases $\sum\psi(w_i)$ by 1. By iterating \eqref{eq:rotate interleaved word}, we can find all $tk$ rotations of $v$. Of these $tk$ rotations, only $k$ have $\sum \psi(w_i)\equiv 0\mod{t}$ (every $t$-th rotation), and only one of these $k$ has the first word not rotated ($\psi(w_1)=0$). Hence no other $v'\in V$ is a rotation of $v$.
	
	Now suppose there are $v,v'\in V$ that share a $t\ell$-factor $u$ (in any position). Rotate $v$ and $v'$, respectively, to find $\rho^z(v)$ and $\rho^{z'}(v')$, both of which have $u$ as their $t\ell$-prefix. Note that $\rho^z(v)$ and $\rho^{z'}(v')$ are not necessarily in $V$, but are still obtained by interleaving a $t$-tuple of words from $W$ (see \eqref{eq:rotate interleaved word}). We may deinterleave $u$ into a $t$-tuple of length $\ell$ words $\mathcal{I}^{-1}(u)=\bm{u}:=(u_1,u_2,\dots,u_t)$. Each of these words $u_i$ is the $\ell$-prefix of a unique word $w_i\in W$ due to the $\ell$-validity of $N$. Thus the only interleaved word with $u$ as its $t\ell$-prefix is $\mathcal{I}(\bm{w})$, and hence $\rho^z(v)=\mathcal{I}(\bm{w})=\rho^{z'}(v')$. But $v'$ is not a rotation of $v$, so $v=v'$. Therefore, $V$ is a $t\ell$-valid set of length $tk$ words and $\mathcal{E}(q,tk,t\ell)\geq|V|=\frac{k^{t-1}}{t}\mathcal{E}(q,k,\ell)^t$.
\end{proof}

As an example, we now apply Theorem~\ref{thm:interleaving} with $q=4$, $k=8$, $\ell=3$ and $t=2$ to the eight necklaces of length 8 in Figure~\ref{fig:product colouring}, which in turn were constructed using Theorem~\ref{thm:product colouring}. These necklaces may be more compactly written as strings: $00030333,10021233,11020323,11120232,01130223,10131222,01031322,00121332$. Suppose we wish to interleave the first and second necklaces in all allowable rotations. We must keep the first necklace fixed and only rotate the second necklace by even amounts to satisfy the conditions of the set $V$:
\[\begin{array}{cccc}
	00030333& 00030333& 00030333& 00030333\\
	10021233& 02123310& 12331002& 33100212
\end{array}\]

We may now interleave these necklaces to obtain the following four necklaces of length 16:
\[\begin{array}{cccccccc}
	0100003201323333& 0002013203333130& 0102033301303032& 0303013000323132
\end{array}\]

But this is for just one pair of necklaces; we can repeat this procedure for every ordered pair of necklaces from our original list of eight. For each of the 64 possible pairs, we produce four new necklaces of length 16, yielding a total of 256 necklaces. These are listed in Table~\ref{tab:interleaving example}, with each line corresponding to a particular pair of necklaces. Hence $\mathcal{E}(4,16,6)=256$.

\begin{table}
\[\footnotesize
\begin{array}{rccccccccc}
	(0,0):& 0000003300333333& 0003003303333030& 0003033300303033& 0303003000333033 \\
	(0,1):& 0100003201323333& 0002013203333130& 0102033301303032& 0303013000323132 \\
	(0,2):& 0101003200333233& 0002003302333131& 0003023301313032& 0203013100323033 \\
	(0,3):& 0101013200323332& 0102003203323131& 0002033201313132& 0302013101323032 \\
	(0,4):& 0001013300323233& 0103003202333031& 0002023300313133& 0203003101333032 \\
	(0,5):& 0100013301323232& 0103013202323130& 0102023201303133& 0202013001333132 \\
	(0,6):& 0001003301333232& 0003013302323031& 0103023200313033& 0202003100333133 \\
	(0,7):& 0000013201333332& 0102013303323030& 0103033200303132& 0302003001323133 \\
	(1,0):& 1000002310233333& 1003002313233030& 1003032310203033& 1303002010233033 \\
	(1,1):& 1100002211223333& 1002012213233130& 1102032311203032& 1303012010223132 \\
	(1,2):& 1101002210233233& 1002002312233131& 1003022311213032& 1203012110223033 \\
	(1,3):& 1101012210223332& 1102002213223131& 1002032211213132& 1302012111223032 \\
	(1,4):& 1001012310223233& 1103002212233031& 1002022310213133& 1203002111233032 \\
	(1,5):& 1100012311223232& 1103012212223130& 1102022211203133& 1202012011233132 \\
	(1,6):& 1001002311233232& 1003012312223031& 1103022210213033& 1202002110233133 \\
	(1,7):& 1000012211233332& 1102012313223030& 1103032210203132& 1302002011223133 \\
	(2,0):& 1010002300332333& 1013002303332030& 1013032300302033& 1313002000332033 \\
	(2,1):& 1110002201322333& 1012012203332130& 1112032301302032& 1313012000322132 \\
	(2,2):& 1111002200332233& 1012002302332131& 1013022301312032& 1213012100322033 \\
	(2,3):& 1111012200322332& 1112002203322131& 1012032201312132& 1312012101322032 \\
	(2,4):& 1011012300322233& 1113002202332031& 1012022300312133& 1213002101332032 \\
	(2,5):& 1110012301322232& 1113012202322130& 1112022201302133& 1212012001332132 \\
	(2,6):& 1011002301332232& 1013012302322031& 1113022200312033& 1212002100332133 \\
	(2,7):& 1010012201332332& 1112012303322030& 1113032200302132& 1312002001322133 \\
	(3,0):& 1010102300233323& 1013102303233020& 1013132300203023& 1313102000233023 \\
	(3,1):& 1110102201223323& 1012112203233120& 1112132301203022& 1313112000223122 \\
	(3,2):& 1111102200233223& 1012102302233121& 1013122301213022& 1213112100223023 \\
	(3,3):& 1111112200223322& 1112102203223121& 1012132201213122& 1312112101223022 \\
	(3,4):& 1011112300223223& 1113102202233021& 1012122300213123& 1213102101233022 \\
	(3,5):& 1110112301223222& 1113112202223120& 1112122201203123& 1212112001233122 \\
	(3,6):& 1011102301233222& 1013112302223021& 1113122200213023& 1212102100233123 \\
	(3,7):& 1010112201233322& 1112112303223020& 1113132200203122& 1312102001223123 \\
	(4,0):& 0010103300232333& 0013103303232030& 0013133300202033& 0313103000232033 \\
	(4,1):& 0110103201222333& 0012113203232130& 0112133301202032& 0313113000222132 \\
	(4,2):& 0111103200232233& 0012103302232131& 0013123301212032& 0213113100222033 \\
	(4,3):& 0111113200222332& 0112103203222131& 0012133201212132& 0312113101222032 \\
	(4,4):& 0011113300222233& 0113103202232031& 0012123300212133& 0213103101232032 \\
	(4,5):& 0110113301222232& 0113113202222130& 0112123201202133& 0212113001232132 \\
	(4,6):& 0011103301232232& 0013113302222031& 0113123200212033& 0212103100232133 \\
	(4,7):& 0010113201232332& 0112113303222030& 0113133200202132& 0312103001222133 \\
	(5,0):& 1000103310232323& 1003103313232020& 1003133310202023& 1303103010232023 \\
	(5,1):& 1100103211222323& 1002113213232120& 1102133311202022& 1303113010222122 \\
	(5,2):& 1101103210232223& 1002103312232121& 1003123311212022& 1203113110222023 \\
	(5,3):& 1101113210222322& 1102103213222121& 1002133211212122& 1302113111222022 \\
	(5,4):& 1001113310222223& 1103103212232021& 1002123310212123& 1203103111232022 \\
	(5,5):& 1100113311222222& 1103113212222120& 1102123211202123& 1202113011232122 \\
	(5,6):& 1001103311232222& 1003113312222021& 1103123210212023& 1202103110232123 \\
	(5,7):& 1000113211232322& 1102113313222020& 1103133210202122& 1302103011222123 \\
	(6,0):& 0010003310332323& 0013003313332020& 0013033310302023& 0313003010332023 \\
	(6,1):& 0110003211322323& 0012013213332120& 0112033311302022& 0313013010322122 \\
	(6,2):& 0111003210332223& 0012003312332121& 0013023311312022& 0213013110322023 \\
	(6,3):& 0111013210322322& 0112003213322121& 0012033211312122& 0312013111322022 \\
	(6,4):& 0011013310322223& 0113003212332021& 0012023310312123& 0213003111332022 \\
	(6,5):& 0110013311322222& 0113013212322120& 0112023211302123& 0212013011332122 \\
	(6,6):& 0011003311332222& 0013013312322021& 0113023210312023& 0212003110332123 \\
	(6,7):& 0010013211332322& 0112013313322020& 0113033210302122& 0312003011322123 \\
	(7,0):& 0000102310333323& 0003102313333020& 0003132310303023& 0303102010333023 \\
	(7,1):& 0100102211323323& 0002112213333120& 0102132311303022& 0303112010323122 \\
	(7,2):& 0101102210333223& 0002102312333121& 0003122311313022& 0203112110323023 \\
	(7,3):& 0101112210323322& 0102102213323121& 0002132211313122& 0302112111323022 \\
	(7,4):& 0001112310323223& 0103102212333021& 0002122310313123& 0203102111333022 \\
	(7,5):& 0100112311323222& 0103112212323120& 0102122211303123& 0202112011333122 \\
	(7,6):& 0001102311333222& 0003112312323021& 0103122210313023& 0202102110333123 \\
	(7,7):& 0000112211333322& 0102112313323020& 0103132210303122& 0302102011323123 \\
\end{array}\]
\caption{Example of interleaved necklaces. Eight 8-cycles from $\dB(4,3)$ were used to produce these 256 16-cycles in $\dB(4,6)$.}\label{tab:interleaving example}
\end{table}

As with Theorem~\ref{thm:product colouring}, if the original colouring is optimal, then the colouring obtained by interleaving the necklaces is also optimal. This allows us to extend existing results by recursively applying Theorem~\ref{thm:interleaving}.

\begin{corollary}
	If every prime factor of $t$ divides $q$, then
	\begin{align*}
		\mathcal{E}(q,tq^\ell,t\ell)&=\frac{q^{(t-1)\ell}}{t}\text{, and}\\
		\mathcal{E}(q,tq^{\ell-1},t\ell)&=\frac{q^{(t-1)\ell+1}}{t}.	
	\end{align*}
\end{corollary}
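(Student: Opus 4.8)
The plan is to observe that both formulas already coincide with the general upper bound $\mathcal{E}(q,k,\ell)\le q^\ell/k$ from~\eqref{eq:E upper bound}: substituting $k=tq^\ell$ and replacing $\ell$ by $t\ell$ gives $q^{t\ell}/(tq^\ell)=q^{(t-1)\ell}/t$, while substituting $k=tq^{\ell-1}$ and $\ell\mapsto t\ell$ gives $q^{t\ell}/(tq^{\ell-1})=q^{(t-1)\ell+1}/t$. Hence it suffices to establish the two matching lower bounds, and for this I would repeatedly apply Theorem~\ref{thm:interleaving}, starting from a suitable optimal base colouring: for the first identity the de~Bruijn sequence, which gives $\mathcal{E}(q,q^\ell,\ell)=1$, and for the second identity Corollary~\ref{cor:dB(q,l) always decomposes into q cycles}, which gives $\mathcal{E}(q,q^{\ell-1},\ell)=q$ for all $q$.

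Next I would factor $t$ into primes (with multiplicity) and introduce them one at a time, each time applying Theorem~\ref{thm:interleaving} with its own parameter equal to a single prime $p\mid q$. Writing $s$ for the product of the primes already processed, the inductive hypothesis for the first identity is $\mathcal{E}(q,sq^\ell,s\ell)=q^{(s-1)\ell}/s$; applying the interleaving theorem with $k=sq^\ell$ and order $s\ell$ yields
\[
\mathcal{E}(q,ps\cdot q^\ell,ps\cdot\ell)\ge\frac{(sq^\ell)^{p-1}}{p}\left(\frac{q^{(s-1)\ell}}{s}\right)^{p}=\frac{q^{(ps-1)\ell}}{ps},
\]
so the bound is preserved with $s$ replaced by $ps$. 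The analogous computation for the second identity, starting from $\mathcal{E}(q,sq^{\ell-1},s\ell)=q^{(s-1)\ell+1}/s$, gives the bound $\mathcal{E}(q,ps\cdot q^{\ell-1},ps\cdot\ell)\ge q^{(ps-1)\ell+1}/(ps)$. In both cases the exponents collapse exactly, so iterating over all prime factors of $t$ produces the claimed lower bounds, which combine with the upper bound above to give equality.

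The step I expect to be the crux is verifying the divisibility hypothesis ``$t$ divides $k$'' required by Theorem~\ref{thm:interleaving} at every stage of the recursion --- this is precisely where the assumption that every prime factor of $t$ divides $q$ is used. At the step introducing $p$ I need $p\mid sq^\ell$ (respectively $p\mid sq^{\ell-1}$); since $p\mid q$ this holds because $p\mid q^\ell$ for every $\ell\ge1$ (respectively $p\mid q^{\ell-1}$ for $\ell\ge2$), independently of the already-accumulated factor $s$. Thus the hypothesis on $t$ guarantees that each interleaving step is legal, and it is exactly this condition that makes the recursion go through. I would also note, as remarked after Theorem~\ref{thm:interleaving}, that optimality is inherited at each step, so equality (and not merely a lower bound) propagates cleanly through the induction.
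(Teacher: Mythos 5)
Your proposal is correct and follows essentially the same route as the paper: start from the optimal base colourings $\mathcal{E}(q,q^\ell,\ell)=1$ and $\mathcal{E}(q,q^{\ell-1},\ell)=q$, then apply Theorem~\ref{thm:interleaving} once for each prime factor of $t$, using the hypothesis that these primes divide $q$ (hence divide the current cycle length, which is always a multiple of $q$) to satisfy the divisibility condition at every step. Your explicit verification of the exponent arithmetic and of the matching upper bound from~\eqref{eq:E upper bound} simply makes precise what the paper leaves to the remark that interleaving preserves optimality.
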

\begin{proof}
	Recall that $\mathcal{E}(q,q^\ell,\ell)=1$ and $\mathcal{E}(q,q^{\ell-1},\ell)=q$ (see Corollary~\ref{cor:dB(q,l) always decomposes into q cycles}). Now apply Theorem~\ref{thm:interleaving} repeatedly using each prime factor of $t$. Note that since each prime factor of $t$ divides $q$, and each value of $k$ in this process is a multiple of $q$, Theorem~\ref{thm:interleaving} is applicable at each step.
\end{proof}

We also have a partial extension of Theorem~\ref{thm:interleaving}, which allows us to interleave a pair of necklaces of odd length:
\begin{theorem}\label{thm:interleaving 2}
	$\mathcal{E}(q,2k,2\ell)\geq\left\lfloor\frac{k}{2}\right\rfloor\mathcal{E}(q,k,\ell)^2$.
\end{theorem}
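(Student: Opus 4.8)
The plan is to reuse the interleaving machinery of Theorem~\ref{thm:interleaving} with $t=2$, changing only the rule that selects one representative word per output necklace. First I would fix an $\ell$-valid set $N$ of $\mathcal{E}:=\mathcal{E}(q,k,\ell)$ necklaces with chosen representatives $w_n$, and form the word set $W$, the rotation-offset function $\psi$, and the interleaving map $\mathcal{I}$ exactly as before. The basic identity $\rho(\mathcal{I}(a,b))=\mathcal{I}(b,\rho(a))$ gives $\rho^2(\mathcal{I}(a,b))=\mathcal{I}(\rho(a),\rho(b))$, so an even rotation of an interleaved word rotates both ingredient words together while an odd rotation swaps their roles. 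The obstruction to copying Theorem~\ref{thm:interleaving} verbatim is that its selection condition $\sum_i\psi(w_i)\equiv 0\pmod 2$ is meaningless when $k$ is odd, since parity is not well defined in $\mathbb{Z}_k$.

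The quantity that does survive is the relative rotation $d:=\psi(w_1)-\psi(w_2)\in\mathbb{Z}_k$: it is invariant under $\rho^2$ and, by the identity above, is sent to $k-1-d$ under $\rho$, while the ordered necklace pair is swapped. Thus each $\rho$-orbit of an interleaved word is labelled by the unordered necklace pair together with the $\{d,k-1-d\}$ class of its relative rotation. I would next show that $\mathcal{I}(w_1,w_2)$ fails to be a genuine $2k$-cycle (is periodic) exactly when $w_1,w_2$ lie in the same necklace and $d$ is the unique fixed point of the involution $d\mapsto k-1-d$; this fixed point is $d=\tfrac{k-1}{2}$, which exists precisely because $k$ is odd, and there the interleaved word is a square $xx$ of period $k$.

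With this picture the construction is essentially forced. Let $D:=\{0,1,\dots,\lfloor k/2\rfloor-1\}$, which is a transversal of the involution $d\mapsto k-1-d$ avoiding its fixed point, and set $V:=\{\mathcal{I}(w_{n_1},\rho^{-d}(w_{n_2})):n_1,n_2\in N,\ d\in D\}$. Each element has relative rotation $d\in D$; since $D$ is disjoint from $\{k-1-d:d\in D\}$ and misses the fixed point, no two elements of $V$ lie in a common $\rho$-orbit and every element is aperiodic, giving $|V|=\lfloor k/2\rfloor\,\mathcal{E}^2$. To prove $V$ is $2\ell$-valid I would reuse the argument of Theorem~\ref{thm:interleaving} almost word for word: if $v,v'\in V$ share a $2\ell$-factor, rotate both so that the factor is a prefix, deinterleave it into two length-$\ell$ words, and invoke the $\ell$-validity of $N$ to recover the unique pair of words in $W$ extending them; this forces the corresponding rotations of $v$ and $v'$ to be equal, and then the transversal property (distinct orbits meet $V$ at most once) together with aperiodicity forces $v=v'$ and the two occurrences to coincide.

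The main obstacle is not this validity argument, which transfers directly, but correctly diagnosing and removing the degenerate words: one must establish that periodicity of $\mathcal{I}(w_1,w_2)$ is governed solely by the involution $d\mapsto k-1-d$, that its sole fixed point exists iff $k$ is odd, and that discarding it is exactly what changes $k/2$ into $\lfloor k/2\rfloor$. As a bonus the even case is subsumed, since for even $k$ the same set $D$ has $k/2$ elements and the involution has no fixed point, recovering the $t=2$ instance of Theorem~\ref{thm:interleaving}. A secondary point to verify is that each selected necklace genuinely has $2k$ distinct $2\ell$-factors, which once more follows from its aperiodicity combined with the $\ell$-validity of $N$.
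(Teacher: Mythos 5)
Your proof is correct and follows the route the paper intends: rerun the interleaving machinery of Theorem~\ref{thm:interleaving} with $t=2$ and replace the selection rule for $V$ by one that still makes sense when $k$ is odd. The structural facts you isolate --- that $\rho$ sends the relative rotation $d=\psi(w_1)-\psi(w_2)$ to $k-1-d$ while swapping the ordered pair of source necklaces, that $\mathcal{I}(w_1,w_2)$ is periodic precisely when the two words lie in the same necklace and $d$ equals the fixed point $\frac{k-1}{2}$ of this involution (which exists exactly when $k$ is odd), and that one must therefore select a transversal of the involution avoiding the fixed point --- are exactly what the paper leaves implicit, and your $2\ell$-validity argument transfers from Theorem~\ref{thm:interleaving} as you describe. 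One substantive difference is worth recording: your transversal $D=\{0,\dots,\lfloor k/2\rfloor-1\}$ is not the paper's stated condition $\psi(w_1)=0$, $\psi(w_2)<\frac{k-1}{2}$. In the coordinate $s:=\psi(w_2)$ the involution reads $s\mapsto 1-s$, with orbits $\{0,1\},\{2,k-1\},\{3,k-2\},\dots$; the paper's set $\{0,1,\dots,\frac{k-3}{2}\}$ contains both elements of the orbit $\{0,1\}$ and neither element of $\{\frac{k-1}{2},\frac{k+3}{2}\}$, so for odd $k\geq 5$ the words $\mathcal{I}(w_{n_1},w_{n_2})$ and $\rho(\mathcal{I}(w_{n_1},w_{n_2}))=\mathcal{I}(w_{n_2},\rho(w_{n_1}))$ are both selected yet determine the same $2k$-cycle, and the paper's $V$ yields only $\frac{k-3}{2}\,\mathcal{E}(q,k,\ell)^2$ distinct cycles. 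Your choice is a genuine transversal avoiding the fixed point and actually achieves the claimed bound $\lfloor k/2\rfloor\,\mathcal{E}(q,k,\ell)^2$, so your write-up is not merely a filling-in of the paper's one-line sketch but a correction of its stated selection set.
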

The proof of Theorem~\ref{thm:interleaving 2} is analogous to that of Theorem~\ref{thm:interleaving}, except that a slightly different condition is used in the construction of the set $V$. Here, $V:=\{\mathcal{I}(w_1,w_2)\mid w_1,w_2\in W,\psi(w_1)=0,\psi(w_2)<\frac{k-1}{2}\}$.

\subsection{Necklace concatenation}
Whenever two length $\ell$ necklaces share an $(\ell-1)$-factor, the corresponding cycles in $\dB(q,\ell)$ can be concatenated. This is because the corresponding edge circuits in $\dB(q,\ell-1)$ have a common vertex, and can thus be joined to create a larger circuit, which in turn gives a larger cycle in $\dB(q,\ell)$. This relationship between necklaces turns out to be very useful, so we construct a \emph{necklace adjacency graph} $N(q,\ell)$. The $q$-ary length $\ell$ necklaces form the vertex set of $N(q,\ell)$, while pairs of necklaces that share an $(\ell-1)$-factor are joined by an edge.

Consider any (connected) subtree $S$ in $N(q,\ell)$. By applying the above operation for each edge in $S$, the cycles for each necklace in $S$ can be concatenated together to produce one long cycle, whose length is the sum of the sizes of the individual necklaces. Hence if we find a spanning forest in $N(q,\ell)$ where each component subtree has $k$ as the total size of its necklaces, we can partition $\dB(q,\ell)$ into $k$-cycles. If the forest does not span $N(q,\ell)$, this still gives a lower bound on the eBug number: if there are $m$ component subtrees in the forest, each with total necklace size $k$, then $\mathcal{E}(q,k,\ell)\geq m$.

In the case when $\ell$ is prime, recall that most necklaces have size $\ell$. Let $N'(q,\ell)$ be the subgraph of $N(q,\ell)$ induced by the size $\ell$ necklaces. Suppose that there is a perfect matching in $N'(q,\ell)$: this is a spanning forest, and each component subtree has total size $2\ell$. Hence concatenating the cycles for each pair of necklaces in the matching produces $\frac{q^\ell-q}{2\ell}$ cycles in $\dB(q,\ell)$, each of length $2\ell$. Similarly, we may generalise this to larger multiples of $\ell$:

\begin{proposition}\label{prop:necklace joining}
	Let $\ell$ be a prime. If there is a spanning forest of $N'(q,\ell)$ in which each component subtree has $t$ vertices, then $\mathcal{E}(q,t\ell,\ell)\geq\frac{q^\ell-q}{t\ell}$, with equality if $q<t\ell$.
\end{proposition}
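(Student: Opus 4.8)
The plan is to feed the given spanning forest directly into the necklace-concatenation machinery described just above the statement, and then to pin down the equality clause using the universal upper bound \eqref{eq:E upper bound}.

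First I would count the vertices of $N'(q,\ell)$. Since $\ell$ is prime, the possible necklace sizes are the divisors of $\ell$, namely $1$ and $\ell$, and the size-$1$ necklaces are exactly the $q$ constant words. Hence the size-$\ell$ necklaces number $\frac{q^\ell-q}{\ell}$, and these are precisely the vertices of $N'(q,\ell)$.

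Next, I would invoke the concatenation operation on each component of the forest. By hypothesis every component subtree $S$ has exactly $t$ vertices, each of which is a size-$\ell$ necklace; concatenating the corresponding $\ell$-cycles of $\dB(q,\ell)$ along the edges of $S$ yields a single cycle whose length is the sum of the necklace sizes, namely $t\ell$. Because the subtrees partition the vertex set of $N'(q,\ell)$, the size-$\ell$ necklaces---and hence their underlying vertices in $\dB(q,\ell)$---are distributed among the components without overlap, so the resulting $t\ell$-cycles are pairwise vertex-disjoint. The number of components equals $\frac{1}{t}\cdot\frac{q^\ell-q}{\ell}=\frac{q^\ell-q}{t\ell}$, which is an integer since the existence of the forest forces $t$ to divide $\frac{q^\ell-q}{\ell}$. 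This gives the lower bound $\mathcal{E}(q,t\ell,\ell)\geq\frac{q^\ell-q}{t\ell}$.

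Finally, for the equality clause I would combine this with the upper bound $\mathcal{E}(q,t\ell,\ell)\leq\frac{q^\ell}{t\ell}$ from \eqref{eq:E upper bound}. Writing $m:=\frac{q^\ell-q}{t\ell}$, which we have just seen is an integer, the upper bound reads $\mathcal{E}(q,t\ell,\ell)\leq m+\frac{q}{t\ell}$. When $q<t\ell$ the extra term is strictly less than $1$, so since $\mathcal{E}(q,t\ell,\ell)$ is an integer it can be at most $m$, forcing $\mathcal{E}(q,t\ell,\ell)=m$. I do not anticipate a serious obstacle here: essentially all the graph-theoretic content is already packaged in the concatenation lemma. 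The only points that need care are the bookkeeping that each concatenated cycle has length exactly $t\ell$ (which relies on the primality of $\ell$ to ensure every necklace involved has size $\ell$) and the integrality of $m$, without which the floor argument establishing equality would fail.
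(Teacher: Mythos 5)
Your proposal is correct and takes essentially the same route as the paper: the lower bound is exactly the paper's concatenation-along-subtrees argument applied to the $\frac{q^\ell-q}{\ell}$ size-$\ell$ necklaces, and your equality step via $\mathcal{E}(q,t\ell,\ell)\leq\frac{q^\ell}{t\ell}=m+\frac{q}{t\ell}$ plus integrality is just a formalisation of the paper's observation that only the $q<t\ell$ leftover (constant-word) vertices remain, which cannot support a further $t\ell$-cycle.
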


In particular, the existence of a Hamiltonian path in $N'(q,\ell)$ is sufficient to apply Proposition~\ref{prop:necklace joining} for any $t$ that divides $\frac{q^\ell-q}{\ell}$ (removing every $t$-th edge from the path produces the required forest). A Hamiltonian path in $N(q,\ell)$ is called a \emph{Gray code} for necklaces, and is conjectured to exist whenever $q$ or $\ell$ is odd \cite{savage1997} (if both $q$ and $\ell$ are even, a simple parity argument reveals that $N(q,\ell)$ is bipartite with unequal parts). It is a trivial matter to transform the Hamiltonian path in $N(q,\ell)$ to one in $N'(q,\ell)$, since all neighbours of a constant necklace are adjacent to each other. For $q=2$, there is also an existing construction of a \emph{2-Gray code} for fixed density necklaces \cite{wang1996}, which lists every necklace with a fixed number of $0$s such that consecutive necklaces differ in exactly two places (a $0$ and $1$ are exchanged).

\subsection{Robot identification with necklaces}
In some applications of robot identification, orientation information is either not required or can be obtained through different means \cite{wang2014,wang2013a}. This relaxes the conditions necessary to obtain a valid colouring of eBugs, since a particular colour sequence may appear more than once on a single eBug. In the de~Bruijn graph, this amounts to finding the maximum number of disjoint closed $k$-walks (instead of $k$-cycles). When $\ell$ is a divisor of $k$, there is an easy solution that turns out to be the best possible; this is a direct corollary of Golomb's conjecture.

\begin{proposition}\label{prop:necklaces for identification}
If $\ell$ is a divisor of $k$, then the maximum number of pairwise disjoint closed $k$-walks in $\dB(q,\ell)$ is $Z(q,\ell)$, the number of $q$-ary length $\ell$ necklaces.
\end{proposition}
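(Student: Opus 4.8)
The plan is to prove the two inequalities separately: the lower bound by exhibiting $Z(q,\ell)$ disjoint closed $k$-walks explicitly, and the matching upper bound by reducing to the maximum number of vertex-disjoint directed cycles in $\dB(q,\ell)$ and invoking Golomb's conjecture.

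For the lower bound I would use the necklace decomposition directly. The $Z(q,\ell)$ length-$\ell$ necklaces partition the vertex set of $\dB(q,\ell)$, and each necklace of size $t$ is a directed cycle of length $t$, where $t\mid\ell$. Since $\ell\mid k$ we have $t\mid k$, so traversing this cycle $k/t$ times produces a closed walk of length exactly $k$ whose vertex set is precisely the $t$ vertices of the necklace. Distinct necklaces are vertex-disjoint, so this yields $Z(q,\ell)$ pairwise disjoint closed $k$-walks, establishing $\geq Z(q,\ell)$.

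For the upper bound, suppose $W_1,\dots,W_N$ are pairwise disjoint closed $k$-walks. Each $W_i$ is a closed walk of positive length and hence contains a directed cycle $C_i$; since $C_i$ uses only vertices of $W_i$ and the walks are vertex-disjoint, the cycles $C_1,\dots,C_N$ are pairwise vertex-disjoint. Thus $N$ is at most the maximum number of pairwise vertex-disjoint directed cycles in $\dB(q,\ell)$, and I would appeal to Golomb's conjecture (proved by Mykkeltveit) to conclude that this maximum is $Z(q,\ell)$, which the necklace decomposition already attains.

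The hard part is making this last step rigorous in exactly the form needed. Golomb's theorem is usually stated for the binary graph and for \emph{decompositions} (covers) arising from nonsingular feedback shift registers, whereas here I need both a general alphabet of size $q$ and the \emph{packing} version, which bounds the number of disjoint cycles even when they do not cover every vertex. This packing form does not follow formally from the covering form: for a \emph{maximum} packing the uncovered part of $\dB(q,\ell)$ must be acyclic, so one cannot simply extend the packing to a full cycle decomposition and read off the bound. I would therefore either cite a $q$-ary packing version of the result or argue directly that any family of pairwise vertex-disjoint cycles in $\dB(q,\ell)$ has at most $Z(q,\ell)$ members; this latter claim, rather than the easy reduction above, is the real content of the proposition.
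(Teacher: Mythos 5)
Your argument is essentially identical to the paper's: the lower bound comes from traversing each necklace's $t$-cycle $k/t$ times (using $t\mid\ell\mid k$), and the upper bound from observing that disjoint closed walks contain disjoint cycles and then invoking Golomb's conjecture as proved by Mykkeltveit. The paper simply cites Mykkeltveit's result directly in the packing form for general $q$ --- that the maximum number of pairwise disjoint cycles of any length in $\dB(q,\ell)$ is $Z(q,\ell)$ --- so the caveat you raise about covering versus packing and about the binary versus $q$-ary statement is not addressed there beyond the citation.
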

\begin{proof}
Golomb's conjecture, which was proved by Mykkeltveit \cite{mykkeltveit1972}, states that the maximum number of pairwise disjoint cycles (of any length) in $\dB(q,\ell)$ is $Z(q,\ell)$. Since each closed walk contains a cycle, there are at most $Z(q,\ell)$ pairwise disjoint closed $k$-walks in $\dB(q,\ell)$.

Now consider any length $\ell$ necklace. The size $t$ of this necklace must divide $\ell$ and hence $k$, so the corresponding $t$-cycle in $\dB(q,\ell)$ can be traversed multiple times to obtain a closed $k$-walk. Thus there are $Z(q,\ell)$ pairwise disjoint closed $k$-walks in the graph.
\end{proof}

\section{Concluding remarks}
The theorems presented in Sections~\ref{sec:LFSRs} and \ref{sec:necklaces} focus on finding large eBug colourings to obtain bounds on the eBug number $\mathcal{E}(q,k,\ell)$. In particular, we concentrated on constructions that yield optimal colourings to support Conjecture~\ref{conj:E always optimal}. The algebraic construction in Theorem~\ref{thm:lfsr translation}, and its corresponding extension in Corollary~\ref{cor:dB(q,l) always decomposes into q cycles}, produces $q$ eBugs of maximal size for any $q$ and $\ell$. The combinatorial results in Section~\ref{sec:necklaces} increase the number of eBugs by moving to a larger de~Bruijn graph. These results can even produce eBug colourings for practical applications: for example, we have $\mathcal{E}(2,16,5)=2$ by Theorem~\ref{thm:lfsr translation}, so $\mathcal{E}(4,16,5)=64$ by Theorem~\ref{thm:product colouring}. The current eBugs have 16 LEDs, a camera can distinguish four colours in an image quite easily, and five consecutive LEDs are visible in practice, so it is possible to construct a network of 64 uniquely identifiable eBugs. Also, Theorem~\ref{thm:interleaving} can be used to significantly increase the number of eBugs without increasing the number of colours, $q$. Moreover, it keeps the ratio $\frac{k}{\ell}$ constant, which is a reasonable assumption when designing such a network (since the camera can see a fixed arc of the circle of LEDs).

Unfortunately, the only way to produce many eBugs (more than $q$) with our results is by applying Theorem~\ref{thm:product colouring} or Theorem~\ref{thm:interleaving}, which necessarily increase either $q$, the number of colours, or $\ell$, the number of consecutive visible LEDs. The major remaining gap appears to be for prime $q$ and $\ell$, with $k<q^{\ell-1}$, since the multiplying/interleaving constructions cannot produce them.

Ideally, we would like to be able to have many small eBugs (small number of LEDs) with a small number of colours. For example, we would like to show that $\mathcal{E}(q,q^{\ell-i},\ell)=q^i$ when $\ell$ is large enough (for each $i$). Probabilistic arguments may be useful in finding such colourings, but a constructive approach is preferable for applications to robot networks (the search space becomes too large even for small practical examples, and in many cases the colourings appear to be quite rare).

Solving Problem~\ref{prob:determine E} is much harder, since there are no optimal colourings in the cases not covered by Conjecture~\ref{conj:E always optimal}. Improving the lower bounds in these cases, however, is likely to be a much easier task. We have shown that near-optimal colourings exist in many cases.

\section*{Acknowledgements}
We would like to thank Heiko Dietrich for introducing us to Zsigmondy's Theorem, and suggesting that it can be used to find examples to which Theorem~\ref{thm:non-primitive lfsrs} can be applied. We also thank the referees for pointing out some related references and for their helpful suggestions.

\section*{References}
%\bibliographystyle{IEEEtranS}
%\bibliography{references}

\end{document}